\newtheorem{theorem}{Theorem}[section]
\newtheorem{lemma}[theorem]{Lemma}
\newtheorem{proposition}[theorem]{Proposition}
\newtheorem{corollary}[theorem]{Corollary}
\theoremstyle{definition}
\newtheorem{definition}[theorem]{Definition}
\newtheorem{example}[theorem]{Example}
\newtheorem{question}[theorem]{Question}
\newtheorem{remark}[theorem]{Remark}
\newcommand{\id}{\text{id}}
\newcommand{\Ker}{{\rm Ker }}
\newcommand{\End}{{\rm End}}
\newcommand{\FPdim}{\text{\rm FPdim}}
\newcommand{\Ext}{\text{\rm Ext}}
\newcommand{\Hom}{{\rm Hom}}
\newcommand{\Rep}{\text{Rep}}
\newcommand{\Vect}{{\rm Vec}}
\newcommand{\A}{\mathcal{A}}
\newcommand{\B}{\mathcal{B}}
\newcommand{\C}{\mathcal{C}}
\newcommand{\M}{\mathcal{M}}
\newcommand{\N}{\mathcal{N}}
\newcommand{\ben}{\begin{enumerate}}
\newcommand{\een}{\end{enumerate}}
\begin{document}

\title[Exact sequences of tensor categories] {Exact sequences of tensor categories with respect to a module category}

\author{Pavel Etingof}
\address{Department of Mathematics, Massachusetts Institute of Technology,
Cambridge, MA 02139, USA} \email{etingof@math.mit.edu}

\author{Shlomo Gelaki}
\address{Department of Mathematics, Technion-Israel Institute of
Technology, Haifa 32000, Israel} \email{gelaki@math.technion.ac.il}

\date{\today}

\keywords{tensor categories;
module categories;}

\begin{abstract}
We generalize the definition of an exact sequence of tensor categories 
due to Brugui\`eres and Natale, and introduce a new notion of an exact sequence of 
(finite) tensor categories with respect to a module category. 
We give three definitions of this notion and show their equivalence. 
In particular, the Deligne tensor product of tensor categories 
gives rise to an exact sequence in our sense. We also show 
that the dual to an exact sequence in our sense is again an exact sequence. 
This generalizes the corresponding statement for exact sequences of Hopf algebras.   
Finally, we show that the middle term of an exact sequence 
is semisimple if so are the other two terms. 
\end{abstract}

\maketitle

\section{introduction}

The theory of exact sequences of tensor categories was developed by A. Brugui\`eres and S. Natale
(\cite{BN, BN1}) as a categorical generalization of the theory of exact sequences of Hopf algebras (\cite{Mo,R,Sch}).
Namely, let $B$ be a Hopf algebra (which for simplicity we will assume finite dimensional).
Recall that a Hopf subalgebra $A\subseteq B$  is {\it normal} if it is invariant under the adjoint action of $B$ on itself 
(this generalizes the notion of a normal subgroup). 
In this case, it is easy to show that $A_+B$ (where $A_+:={\rm Ker}(\varepsilon)|_A$ is the augmentation ideal in $A$) 
is a two-sided ideal in $B$, hence a Hopf ideal, and 
thus the quotient $C:=B/A_+B$ is a Hopf algebra. 
In this situation, one says that one has a (short) 
{\it exact sequence of Hopf algebras}\footnote{Note that such a sequence is not a short exact sequence of vector spaces: the composition is not zero but rather is the augmentation map $\varepsilon$, and  $\dim (B)$ equals
$\dim (A)\dim (C)$ rather than $\dim (A)+\dim (C)$.} 
\begin{equation}\label{esha} 
A\to B\to C.
\end{equation}

The sequence \eqref{esha} defines
a sequence of tensor functors \begin{equation}\label{exactseq}
\mathcal{A}\xrightarrow{\iota}\mathcal{B}\xrightarrow{F}\mathcal{C}
\end{equation}
between the corresponding tensor categories of finite dimensional comodules,
$\mathcal{A}:=A-{\rm comod}$, $\mathcal{B}:=B-{\rm comod}$, $\mathcal{C}:=C-{\rm comod}$. 
This sequence has the following categorical properties: 

(i) The functor $F$ is {\it surjective} (or {\it dominant}), 
i.e., any object $Y$ of $\mathcal{C}$ is a subquotient 
(equivalently, a subobject, a quotient) of $F(X)$ for 
some $X\in \mathcal{B}$. 

(ii) The functor $\iota$ is {\it injective}, i.e., is a fully faithful embedding. 

(iii) The kernel of $F$ (i.e., the category of objects $X$ such that $F(X)$ is 
trivial) coincides with the image of $\iota$. 

(iv) The functor $F$ is {\it normal}, i.e., for any $X\in \mathcal{B}$ 
there exists a subobject $X_0\subseteq X$ such that $F(X_0)$ 
is the largest trivial subobject of $F(X)$. 

Brugui\`eres and Natale called a sequence \eqref{exactseq} satisfying conditions (i)-(iv) 
{\it an exact sequence of tensor categories}. This is a very nice and natural definition, but 
its drawback is that conditions (ii), (iii) force the category $\mathcal{A}$ 
to have a tensor functor to ${\rm Vec}$ (namely, $F\circ \iota$), i.e., to be the category of comodules
over a Hopf algebra. In particular, the Deligne tensor product  
$\mathcal{B}:=\mathcal{C}\boxtimes \mathcal{A}$ of finite 
tensor categories $\mathcal{C}$, $\mathcal{A}$  
is not included in an exact sequence \eqref{exactseq}
unless $\mathcal{A}$ admits a tensor functor to ${\rm Vec}$
(i.e., a fiber functor).

The goal of this paper is to generalize the definition of \cite{BN} further to 
eliminate this drawback, and in particular to include the example
$\mathcal{B}:=\mathcal{A}\boxtimes \mathcal{C}$
for any finite tensor categories $\mathcal{A}$, $\mathcal{C}$. 
Namely, we do so by replacing the category ${\rm Vec}$ 
by the category ${\rm End}(\mathcal{M})$ of right exact endofunctors of 
an indecomposable exact $\mathcal{A}$-module category $\mathcal{M}$, 
and define the notion of an exact sequence ``with respect to $\mathcal{M}$",
which is a sequence of tensor functors of the form 
$$
\mathcal{A}\xrightarrow{\iota} \mathcal{B}\xrightarrow{F} \mathcal{C}\boxtimes \End(\M),$$
such that $\iota$ is injective, $F$ is surjective, $\A={\rm Ker}(F)$ (the subcategory of $X\in \B$ such that $F(X)\in \End(\M)$)\footnote{Here and below, for brevity we write $\A$ for $\iota(\A)$.}, 
and $F$ is normal (i.e., for any $X\in \mathcal{B}$ 
there exists a subobject $X_0\subseteq X$ such that $F(X_0)$ 
is the largest subobject of $F(X)$ contained in $\End(\M)$). 
Moreover, we show that the dual of an exact sequence is again an exact sequence. 
This is an important feature of the original setting of exact sequences of Hopf algebras,
which breaks down in the generalization of \cite{BN}, but is restored in our more general setting. 
We also present another definition of exact sequences with respect to a module category, and
show that the two definitions are equivalent. We then generalize a number of results of \cite{BN} 
to our setting; in particular, we show that for any exact sequence, 
${\rm FPdim}(\mathcal{B})={\rm FPdim}(\mathcal{A}){\rm FPdim}(\mathcal{C})$, and that this property in fact 
characterizes exact sequences (provided that $\iota$ is injective, $F$ is surjective, and $\A\subseteq {\rm Ker}(F)$.
Finally, we show that if in such an exact sequence, $\A$ and $\C$ are semisimple, then so is $\B$. 

The structure of the paper is as follows. Section 2 contains preliminaries. 
In particular, in this section we generalize the theory of surjective tensor functors 
from \cite[Section 2]{EO} to the setting involving module categories. 
In Section 3 we give a definition of an exact sequence of finite tensor categories 
with respect to a module category. We also give two other definitions and show that all three definitions are 
equivalent, and prove the semisimplicity of $\B$ given the semisimplicity of $\A$, $\C$. 
Finally, in Section 4, we discuss dualization of exact sequences and give some examples. 

\begin{remark} 1. For simplicity we present our definitions and results for {\em finite} tensor categories, but many of our statements 
extend with appropriate changes to the case of arbitrary tensor and multitensor categories. 

2. In the special case of fusion categories (i.e., when all the categories involved are semisimple), 
the proofs of our results are much simpler, since $\End(\M)$ is a rigid category, 
all objects are projective, all module categories and functors are exact, etc. 
\end{remark} 

{\bf Acknowledgements.} We are very grateful to Dennis Gaitsgory, who suggested an alternative definition 
of exact sequences given in Theorem 
\ref{ges}(2),(3) (in the special case of \cite{BN}, i.e., Corollary \ref{bnes}). 
We also thank D. Nikshych and V. Ostrik for useful discussions.
In particular, we are very grateful to V. Ostrik for providing a proof of Proposition \ref{algint1} in the semisimple case
(our proof is a generalization of his). The work of P.E. was  partially supported by the NSF grant DMS-1000113.
The work of S.G. was partially supported by the Israel Science Foundation (grant no. 561/12). 

\section{Preliminaries}

\subsection{A linear algebra lemma}

The following lemma is well known. 

\begin{lemma}\label{lal} 
Let $A$ be a square matrix with positive entries, and $B$ be a square matrix of the same size with $0\le b_{ij}\le a_{ij}$, 
such that $B\ne A$. Then the largest eigenvalues $\lambda(A),\lambda(B)$ of $A,B$ satisfy $\lambda(B)<\lambda(A)$. 
\end{lemma}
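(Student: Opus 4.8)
The plan is to deduce the result from the Perron--Frobenius theorem together with a comparison argument using a strictly positive left eigenvector of $A$. Since $A$ has strictly positive entries, the Perron--Frobenius theorem guarantees that its largest eigenvalue $\lambda(A)$ is real and strictly positive, and that there is a strictly positive row vector $u>0$ with $uA=\lambda(A)u$. Since $B$ has nonnegative entries, its largest eigenvalue $\lambda(B)$ is a nonnegative real number admitting a nonnegative column eigenvector $w\ge 0$, $w\ne 0$, with $Bw=\lambda(B)w$. I would record these two facts first, as everything that follows is elementary manipulation.

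Next I would argue by contradiction, assuming $\lambda(B)\ge \lambda(A)$. From $b_{ij}\le a_{ij}$ and $w\ge 0$ one obtains the entrywise inequality $Aw\ge Bw=\lambda(B)w\ge \lambda(A)w$, so the vector $Aw-\lambda(A)w$ is entrywise nonnegative. Pairing it with the strictly positive left eigenvector gives $u(Aw-\lambda(A)w)=\lambda(A)uw-\lambda(A)uw=0$; since $u>0$ and $Aw-\lambda(A)w\ge 0$, this forces $Aw=\lambda(A)w$. Combined with $Aw\ge \lambda(B)w\ge \lambda(A)w$ and $w\ne 0$, this in turn forces $\lambda(B)=\lambda(A)$, and hence $Bw=Aw$, i.e. $(A-B)w=0$.

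The main obstacle---and the only place where the strict positivity of $A$ (as opposed to mere irreducibility) is genuinely used---is to rule out the possibility that $w$ has zero entries, since $B$ need not be irreducible and its eigenvector could a priori be only partially supported. This is resolved by observing that $Aw=\lambda(A)w$ with $A$ strictly positive, $w\ge 0$ and $w\ne 0$ implies that $Aw$ is strictly positive, whence $w=\lambda(A)^{-1}Aw>0$ (recall $\lambda(A)>0$). Finally, reading off $(A-B)w=0$ row by row, each equation is a sum of nonnegative terms $(a_{ij}-b_{ij})w_j$ with $w_j>0$, so $a_{ij}=b_{ij}$ for all $i,j$; this yields $A=B$, contradicting the hypothesis $B\ne A$. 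Therefore $\lambda(B)<\lambda(A)$, as claimed.
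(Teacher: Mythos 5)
Your proof is correct. It rests on the same basic comparison idea as the paper's --- pair a strictly positive Perron eigenvector of $A$ on one side with a nonnegative Perron eigenvector of $B$ on the other to obtain $\lambda(B)\le\lambda(A)$ --- but the two arguments diverge in how they upgrade the weak inequality to a strict one. The paper keeps the soft inequality and perturbs the matrices: it first observes that strict inequality holds whenever the \emph{smaller} matrix has strictly positive entries (so that its left eigenvector is strictly positive), and then sandwiches $\lambda(B)\le\lambda\left(\frac{1}{2}(A+B)\right)<\lambda(A)$ using the strictly positive midpoint matrix $\frac{1}{2}(A+B)$. You instead run an equality-case rigidity analysis: assuming $\lambda(B)\ge\lambda(A)$, pairing with the positive left eigenvector $u$ of $A$ forces the nonnegative eigenvector $w$ of $B$ to be an eigenvector of $A$, hence strictly positive by positivity of $A$, and then $(A-B)w=0$ read entrywise forces $A=B$, contradicting the hypothesis. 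Both arguments are complete; yours has the merit of isolating exactly where strict positivity of $A$ (rather than mere irreducibility) is used, namely to promote $w\ge 0$ to $w>0$, while the paper's averaging trick avoids any equality-case analysis at the cost of invoking the weak form of the lemma twice.
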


\begin{proof} 
Let $v$ be a positive column eigenvector of $A$ (defined uniquely up to scale), i.e., $Av=\lambda(A)v$. 
Let $w$ be a nonnegative row eigenvector of $B$, i.e., $wB=\lambda(B)w$. 
(Such eigenvectors exist by the Frobenius-Perron theorem.)
Then 
$$
\lambda(B)w\cdot v=wBv\le wAv=\lambda(A)w\cdot v, 
$$
which implies that $\lambda(B)\le \lambda(A)$ (as $w\ne 0$ and hence $w\cdot v>0$). 
Moreover, if $B$ has strictly positive entries, 
then so does the vector $w$, so $wBv<wAv$, and hence $\lambda(B)<\lambda(A)$. 

Now note that $\frac{1}{2}(A+B)$ has strictly positive entries. 
Thus, applying the above to the pairs of matrices $\frac{1}{2}(A+B),B$ 
and $A,\frac{1}{2}(A+B)$, we get 
$$
\lambda(B)\le \lambda\left(\frac{1}{2}(A+B)\right)<\lambda(A),
$$
as claimed. 
\end{proof} 

\subsection{Surjective functors} 
Throughout the paper, we let $k$ be an algebraically closed field. 
Recall that an abelian category over $k$ is finite if it is equivalent to the category of modules 
over a finite dimensional $k$-algebra. 
Let $F: \M\to \N$ be an exact functor between finite abelian categories over $k$.

\begin{definition} We say that $F$ is {\it surjective}, or {\it dominant}, if 
any $Y\in \N$ is a subquotient of $F(X)$ for some $X\in \M$. 
\end{definition}

The following lemma is standard. 

\begin{lemma}\label{eqcon}  
Assume that $F:\M\to \N$ is as above, and that in $\N$, projectives and injectives coincide. 
Then the following conditions are equivalent: 

(i) Any $Y\in \N$ 
is a subobject of $F(X)$ for some $X\in \M$. 

(ii) Any $Y\in \N$ 
is a quotient of $F(X)$ for some $X\in \M$. 

(iii) Any $Y\in \N$ 
is a subquotient of $F(X)$ for some $X\in \M$ (i.e., $F$ is surjective).  
\end{lemma}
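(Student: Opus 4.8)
The plan is to prove the equivalence of the three conditions by showing (i) $\Leftrightarrow$ (ii) using the hypothesis that projectives and injectives coincide in $\N$, and then establishing that each of (i), (ii) implies the weaker condition (iii), while (iii) implies (i). The heart of the matter is the symmetry between subobjects and quotients, which is exactly what the Frobenius (self-injective) structure of $\N$ will provide.

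First I would prove (ii) $\Rightarrow$ (i). Suppose every $Y \in \N$ is a quotient of some $F(X)$. Given an arbitrary $Y$, I want to realize $Y$ as a subobject of some $F(X')$. The idea is to use duality: I would apply the quotient hypothesis to a suitable object and dualize the resulting surjection into an injection. More precisely, since $\N$ is finite and projectives coincide with injectives, every object has a projective cover and an injective hull, and the contravariant duality on the category (or the interplay between the projective cover of $Y$ and the injective hull) converts epimorphisms onto $Y$ into monomorphisms out of $Y$. Concretely, if $F(X) \twoheadrightarrow P$ where $P$ is the projective cover of the injective hull of $Y$, then because $P$ is injective, $Y \hookrightarrow P$ splits off enough structure to embed $Y$ into $F(X)$. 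The symmetric argument gives (i) $\Rightarrow$ (ii).

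Next, both (i) $\Rightarrow$ (iii) and (ii) $\Rightarrow$ (iii) are immediate, since a subobject (resp.\ quotient) is in particular a subquotient. The remaining implication (iii) $\Rightarrow$ (i) is where one uses the coincidence of projectives and injectives most essentially: if $Y$ is a subquotient of $F(X)$, say $Y = V/W$ with $W \subseteq V \subseteq F(X)$, I would replace $Y$ by its injective hull $I(Y)$. Since $I(Y)$ is injective, hence projective, it is a direct summand of any object surjecting onto it, and one can lift the subquotient presentation to realize $I(Y)$ — and hence its subobject $Y$ — as a genuine subobject of some $F(X')$. Thus every subquotient is already a subobject once we are in a category where injectives and projectives coincide.

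The main obstacle I expect is the bookkeeping in (iii) $\Rightarrow$ (i): passing from a subquotient to an honest subobject requires carefully exploiting that an injective object is a direct summand whenever it appears as a subquotient of an exact functor's image, and ensuring exactness of $F$ is used correctly so that projectivity/injectivity is preserved under the relevant pullbacks and pushouts. One must be careful that $F$ being merely exact (not necessarily a tensor functor here) still lets injective hulls and projective covers interact well with images $F(X)$; the self-injectivity of $\N$ is precisely the structural input that removes the asymmetry between "sub" and "quotient" and makes all three conditions collapse to one.
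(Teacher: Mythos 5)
Your proposal is essentially the paper's argument: the whole content is (iii) $\Rightarrow$ (i) (and dually (iii) $\Rightarrow$ (ii)), proved by passing to the injective hull $I(Y)$ (resp.\ projective cover $P(Y)$), which, being both injective and projective, first splits off from the quotient of $F(X)$ containing it and then splits off from $F(X)$ itself, so that $Y\subseteq I(Y)\subseteq F(X)$. Your separate direct proof of (i) $\Leftrightarrow$ (ii) is logically redundant given these implications, and its appeal to a ``contravariant duality on the category'' is not literally available in a general finite abelian category, but the splitting mechanism you invoke there is the correct one and is exactly what the (iii) $\Rightarrow$ (i),(ii) steps already use.
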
 

\begin{proof} Let us show that (iii) implies (i). Let $I(Y)$ be the injective hull of $Y$. 
Then $I(Y)$ is a subquotient of $F(X)$ for some $X\in \M$. Thus, $I(Y)\subseteq Z$, where 
$Z$ is a quotient of $F(X)$. But since $I(Y)$ is injective, $Z=I(Y)\oplus Z'$, so we 
see that $I(Y)$ is a quotient of $F(X)$. But since projectives and injectives coincide in $\N$,  
$I(Y)$ is also projective, hence $F(X)=I(Y)\oplus T$ for some $T$. Hence, $Y\subseteq F(X)$. 

Similarly, to prove that (iii) implies (ii), let $P(Y)$ be the projective cover of $Y$, 
and let $X$ be such that $P(Y)$ is a subquotient of $F(X)$. Since $P(Y)$ is both projective and injective, 
it is a direct summand in $F(X)$, so $Y$ is a quotient of $F(X)$. 

The rest is obvious.  
\end{proof} 

\subsection{Dual module categories}
Let $\mathcal{A}$ be a finite tensor category over $k$ (see \cite{EO}).
Let $\M$ be a left $\mathcal{A}$-module category. 
Then the opposite category $\M^{\rm op}$ has two (in general, different) 
structures of a right $\A$-module category. Namely, we can define 
the category $\M^\vee$ with the right $\A$-action defined by 
$M\otimes X:={}^*X\otimes M$, and the category 
${}^\vee\M$ with the right $\A$-action defined by 
$M\otimes X:=X^*\otimes M$. Similarly, if 
$\N$ is a right $\A$-module category, then 
we can define two left $\A$-module structures on $\N^{\rm op}$ as 
follows: the category $\N^\vee$ with $X\otimes M:=M\otimes {}^*X$, and 
the category ${}^\vee\N$ with $X\otimes M:=M\otimes X^*$. 
It is easy to see that ${}^\vee(\M^\vee)=\M$ for any 
right or left $\A$-module category $\M$
(cf. \cite[3.4.2]{DSS} and \cite[7.1]{EGNO}). Also note that 
if $A\in \A$ is an algebra, and $\M$ is 
the category of right $A$-modules in $\A$ 
(which is therefore a left $\A$-module category), 
then the category of left $A$-modules 
is naturally equivalent to $\M^\vee$ as a right $\A$-module category. 

\subsection{Tensor products of module categories} 
Let $\A$, $\M$, $\N$ be as in the previous subsection. 
Consider the Tambara (or balanced) tensor product 
$\mathcal{N} \boxtimes_{\mathcal{A}}\mathcal{M}$ (see \cite{DSS,DSS2,ENO,Ta}).
Namely, if $A_1,A_2$ are algebras in $\A$ such that $\M={\rm mod}-A_1$ and 
$\N=A_2-{\rm mod}$, then $\mathcal{N} \boxtimes_{\mathcal{A}}\mathcal{M}$
is the category of $(A_2,A_1)$-bimodules in $\A$, 
which can also be described as the category 
of left $A_2$-modules in $\M$, or the category 
of right $A_1$-modules in $\N$ (see \cite{DSS2} and \cite[7.8]{EGNO}). 

By \cite[Corollary 3.4.11]{DSS}, one has natural equivalences
\begin{equation}\label{tenprod}
\mathcal{N} \boxtimes_{\mathcal{A}}\mathcal{M}\cong {\rm Fun}_{\mathcal{A}}(\mathcal{N}^\vee,\mathcal{M})\cong {\rm Fun}_{\mathcal{A}}({}^\vee\mathcal{M},\mathcal{N}).
\end{equation}

\subsection{Exact module categories}
Recall from \cite[Section 3]{EO} that a left $\A$-module category $\M$ is said to be {\em indecomposable} if it is not a direct sum of
two nonzero module categories, and is called {\em exact} if $P\otimes M$ is projective for any projective $P\in \A$ and any $M\in \M$. 
The same definition applies to right module categories. 

Recall (\cite[Corollary 3.6]{EO}) that, similarly to finite tensor categories, exact module categories are quasi-Frobenius,
i.e., projectives and injectives coincide in them. 

Let $\mathcal{M}$ be an indecomposable 
exact $\mathcal{A}$-module category\footnote{Here and below, by ``a module category'' we will mean a left module category, unless otherwise specified.} 
(such a category is always finite \cite[Lemma 3.4]{EO}). 
Let $\End(\M)$ be the abelian category of {\em right} exact endofunctors of
$\mathcal{M}$, and let $\mathcal{A}_{\mathcal{M}}^*:=
\End_{\mathcal{A}}(\mathcal{M})$ be the dual category, i.e., the category of
$\mathcal{A}-$linear exact endofunctors of $\mathcal{M}$. Recall that
composition of functors turns $\End(\M)$ into a monoidal
category,\footnote{Note that if $\M$ is not semisimple then the category $\End(\M)$ is not rigid, and its tensor product is not exact.} 
and $\mathcal{A}_{\mathcal{M}}^*$ into a finite tensor
category. 

By (\ref{tenprod}), we have $\mathcal{A}_{\mathcal{M}}^*={}^\vee \mathcal{M}\boxtimes_{\mathcal{A}}\mathcal{M}$. Hence, taking duals, we get 
$\mathcal{A}_{\mathcal{M}}^{*\rm op}=\mathcal{M}^\vee\boxtimes_{\mathcal{A}}\mathcal{M}$. Also, define a functor 
$$G: \M\boxtimes \M^\vee\to \End(\M),\,\,\,M_1\boxtimes M_2\mapsto
\Hom_\M(?,M_2)^*\boxtimes M_1.$$ Then it is easy to check 
that $G$ is an equivalence of $\A$-bimodule categories (this is, in fact, a special case of \cite[Corollary 3.4.11]{DSS}).  

\begin{lemma}\label{critex} Let $\M$ be a module category over a finite tensor category $\A$. Suppose that there exists a nonzero object $X\in \A$ such that 
$X\otimes M$ is projective for any $M\in \M$. Then $\M$ is exact. 
\end{lemma}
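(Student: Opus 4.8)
The plan is to reduce the assertion to a statement that feeds directly into the hypothesis. Since a general projective object of $\A$ is a finite direct sum of indecomposable projectives, and a direct summand of a projective object of $\M$ is again projective, it suffices to prove that $P\otimes M$ is projective for every \emph{indecomposable} projective $P\in\A$ and every $M\in\M$. For this I will show that each such $P$ is a direct summand of $X\otimes Z$ for a suitable $Z\in\A$. Granting this, the action bifunctor $-\otimes M\colon\A\to\M$ is additive and exact in the $\A$-variable, so $P\otimes M$ is a direct summand of $(X\otimes Z)\otimes M\cong X\otimes(Z\otimes M)$; the latter has the form $X\otimes N$ with $N:=Z\otimes M\in\M$, hence is projective by hypothesis, and therefore so is its summand $P\otimes M$.

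To realize $P$ as such a summand, I would use rigidity of $\A$. Let $X^*$ be the right dual of $X$, with evaluation $\mathrm{ev}\colon X\otimes X^*\to\be$. Because $X\ne 0$, the zigzag (snake) identity forces $\mathrm{ev}\ne 0$: if it vanished, the composite $X\cong X\otimes\be\to X\otimes X^*\otimes X\to\be\otimes X\cong X$, which equals $\mathrm{id}_X$, would be $0$. Since the unit object $\be$ is simple, the image of $\mathrm{ev}$ is a nonzero subobject of $\be$ and hence all of $\be$, so $\mathrm{ev}$ is surjective. Tensoring this surjection on the right by $P$ and using exactness of $-\otimes P$, I obtain a surjection $X\otimes(X^*\otimes P)\twoheadrightarrow \be\otimes P\cong P$. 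As $P$ is projective, this splits, exhibiting $P$ as a direct summand of $X\otimes Z$ with $Z:=X^*\otimes P\in\A$, which is exactly the input required above.

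The point to watch — and the reason a hypothesis about a \emph{single} object $X$ suffices — is that the naive attempt to prove exactness by tensoring a known projective of $\M$ on the left by an object of $\A$ is circular, since stability of projectives under the $\A$-action is precisely what is being proved. The whole device is to keep the distinguished object $X$ on the outside, so that the hypothesis ``$X\otimes(-)$ lands in projectives'' can be applied, and the evaluation morphism is what lets me pull an arbitrary projective $P$ out of an expression of the form $X\otimes(\text{something})$. The remaining facts are standard for finite tensor categories and their module categories: the action $\A\times\M\to\M$ is biexact, so it preserves surjections and direct summands and is compatible with the associativity constraint; the unit $\be$ is simple; and evaluations of nonzero objects are nonzero. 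I expect the verification that $\mathrm{ev}$ is surjective to be the only genuinely delicate point, and it is settled by the snake identity together with simplicity of $\be$.
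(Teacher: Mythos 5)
Your proof is correct, and it takes a route that is recognizably different in its mechanics from the paper's, even though both arguments turn on the same underlying idea: use rigidity to bootstrap from the single object $X$ to all projectives of $\A$. The paper fixes a simple $Z\in\A$, picks a simple composition factor $Y$ of $Z\otimes X^*$, and uses the adjunction $\Hom(P(Y),Z\otimes X^*)\cong\Hom(P(Y)\otimes X,Z)\ne 0$ together with projectivity of $P(Y)\otimes X$ in $\A$ to realize $P(Z)$ as a direct summand of $P(Y)\otimes X$; it then concludes that $P(Z)\otimes M$ is a summand of $P(Y)\otimes(X\otimes M)$, which is projective because $X\otimes M$ is projective in $\M$ and tensoring a projective object of $\M$ on the left by any object of $\A$ preserves projectivity (a standard adjunction fact that the paper uses implicitly). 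You instead split the surjection $X\otimes X^*\otimes P\twoheadrightarrow P$ obtained from the evaluation, using projectivity of $P$, which exhibits $P$ as a summand of $X\otimes(X^*\otimes P)$ with $X$ on the outside left; the hypothesis then applies verbatim to $X\otimes(X^*\otimes P\otimes M)$, so you never need the auxiliary fact about tensoring projectives of $\M$. Your version is slightly more self-contained and uniform (one splitting handles all indecomposable projectives at once, with no choice of composition factor), at the cost of the small verification that the evaluation is surjective; the paper's version is shorter on the page because it quietly outsources that work to standard properties of projectives in module categories. The only point to tidy is the duality convention: whether the evaluation $X\otimes{}^?X\to\be$ involves the right or the left dual depends on the convention in force, but the zigzag argument you give is valid for the appropriate choice, so nothing is at stake.
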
 

\begin{proof}  
Let $Z\in \A$ be a simple object. Pick a simple object $Y\in \A$ such that $Y$ is a composition factor 
of $Z\otimes X^*$. Let $P(Y)$ be the projective cover of $Y$. Then $\Hom(P(Y),Z\otimes X^*)=\Hom(P(Y)\otimes X,Z)\ne 0$. 
Since $P(Y)\otimes X$ is projective, we have $P(Y)\otimes X=P(Z)\oplus S$ for some $S$. Now, for any $M\in \M$, 
$X\otimes M$ is projective, so $P(Y)\otimes X\otimes M$ is projective. 
Hence $P(Z)\otimes M$ is projective, as a direct summand of a projective object. 
This implies the lemma. 
\end{proof} 

\begin{corollary}\label{subcaexact} If $\A\subseteq \B$ are finite tensor categories, and 
$\M$ is a $\B$-module category which is exact as an $\A$-module category, then it is exact as a $\B$-module category. 
\end{corollary}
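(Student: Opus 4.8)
The plan is to apply Lemma \ref{critex} directly, since Corollary \ref{subcaexact} is exactly the kind of statement that should follow from it with almost no extra work. The key observation is that exactness is a condition that only becomes \emph{harder} to verify as we pass from the larger category $\B$ to the smaller category $\A$: to prove $\M$ is exact over $\B$, I need $P\otimes M$ projective for \emph{every} projective $P\in\B$, whereas exactness over $\A$ only gives this for projectives coming from $\A$. So the real content is that a single well-chosen projective object suffices to bootstrap the full projectivity condition, and Lemma \ref{critex} packages precisely this bootstrapping.

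First I would produce a nonzero object $X\in\B$ such that $X\otimes M$ is projective in $\M$ for every $M\in\M$. The natural candidate is to take $X$ to be (a projective object arising from) the unit, or more safely a projective object of $\A$ viewed inside $\B$. Concretely, let $P\in\A$ be any nonzero projective object of $\A$ (for instance the projective cover of $\mathbf{1}$, or any $P(Z)$); since $\M$ is exact as an $\A$-module category, $P\otimes M$ is projective in $\M$ for all $M\in\M$. This is the hypothesis of Lemma \ref{critex} with $X=P$, provided $P$ is nonzero, which it is. Note that I do not need $P$ to be projective in $\B$, nor even nonzero as a projective of $\B$; Lemma \ref{critex} only asks for a nonzero object of the ambient tensor category (here $\B$) whose tensor action lands in the projectives of $\M$.

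The one point requiring care is that ``projective in $\M$'' must mean the same thing whether we regard $\M$ as an $\A$-module category or a $\B$-module category: projectivity is an intrinsic property of the abelian category $\M$ and does not depend on the module structure, so there is no ambiguity. With $X=P\in\A\subseteq\B$ nonzero and $X\otimes M$ projective for all $M\in\M$, Lemma \ref{critex} (applied to the finite tensor category $\B$ and its module category $\M$) immediately yields that $\M$ is exact as a $\B$-module category.

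I expect essentially no obstacle here; the proof is a two-line reduction. The only thing to double-check is the existence of a nonzero projective in $\A$, which holds because $\A$ is a finite tensor category (its regular representation is projective and nonzero), and the fact that Lemma \ref{critex} was stated for an arbitrary module category over a finite tensor category, so it applies verbatim with $\B$ in the role of $\A$. Thus the corollary follows by invoking exactness of $\M$ over $\A$ to supply the required object $X$, and then Lemma \ref{critex} to conclude exactness over $\B$.
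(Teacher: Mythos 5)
Your proof is correct and is exactly the argument the paper intends: the paper's own proof consists of the single line ``This follows immediately from Lemma \ref{critex},'' and your write-up simply makes explicit the choice of a nonzero projective $P\in\A$ as the object $X$ in that lemma, applied with $\B$ as the ambient tensor category.
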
 

\begin{proof} This follows immediately from Lemma \ref{critex}. 
\end{proof} 

\begin{proposition}\label{critex2} Let $\A$ be a finite tensor category, and $\M$ be an indecomposable exact module category over $\A$. 
Then 

(i) For every nonzero object $M\in \M$, the functor $F_M:\A\to \M$ given by $F_M(X):=X\otimes M$ is surjective.   

(ii) The action functor $F: \A\to \End(\M)$ given by $F(X)=X\otimes ?$ is surjective. 
\end{proposition} 

\begin{proof}
Let $M_j$ be the simple objects in $\M$, and $R_j$ be their projective covers.
Let $P_i$ be the indecomposable projective objects in $\A$, and let $P:=\oplus_i P_i$. Then $P_i\otimes M_r=\oplus_j b_{ir}^j R_j$, $b_{ir}^j\in \Bbb Z_+$ (as $\M$ is exact). 
This means that $F(P)=\oplus_{j,r}(\sum_i b_{ir}^j)R_j\boxtimes R_r^\vee\in \M\boxtimes \M^\vee=\End(\M)$, where 
$R_r^\vee$ is the projective cover of $M_r$ in $\M^\vee$.
Since for any $j,r$, $\underline{\Hom}(M_j,M_r)\ne 0$, we see that for any $j,r$, $\sum_i b_{ir}^j>0$. 
This means that for a sufficiently large $m$, any object of $\End(\M)$ is a quotient of $F(mP)$, 
and for any $M\ne 0$, any object of $\M$ is a quotient of $F_M(mP)$, proving both (i) and (ii).   
\end{proof} 

\subsection{Regular objects}
For a finite tensor category $\A$, let ${\rm Gr}(\A)$ be the Grothendieck ring of $\A$, and $K_0(\A)$ be
the group of isomorphism classes of projective objects in $\A$ (it is a bimodule over ${\rm Gr}(\A)$). We have a natural homomorphism $\tau_\A: K_0(\A)\to {\rm Gr}(\A)$
(in general, neither surjective nor injective). 
Recall \cite[Subsection 2.4]{EO} that we have a character $\FPdim: {\rm Gr}(\A)\to \Bbb R$, attaching to $X\in \A$ the Frobenius-Perron dimension of $X$. 
Recall also that we have a virtual projective object $R_\A\in K_0(\A)\otimes_{\Bbb Z}\Bbb R$, such that 
$XR_\A=R_\A X=\FPdim(X) R_\A$ for all $X\in {\rm Gr}(\A)$. Namely, we have 
$R_\A=\sum_i \FPdim(X_i)P_i$, where $X_i$ are the simple objects of $\A$, and $P_i$ 
are the projective covers of $X_i$. Using the homomorphism $\tau_\A$, we may also regard 
$R_\A$ as an element of ${\rm Gr}(\A)\otimes_{\Bbb Z}\Bbb R$ (in other words, for brevity we will write $R_\A$ 
instead of $\tau_\A(R_\A)$). Following \cite[Subsection 2.4]{EO}, we set $\FPdim(\A):=\FPdim(R_\A)$. 
 
Also, if $\M$ is an indecomposable exact $\A$-module category, let 
${\rm Gr}(\M)$ be the Grothendieck group of $\M$. 
Let $\lbrace{ M_j\rbrace}$ be the basis of simple objects of $\M$. 
Let $K_0(\M)$ be the group of isomorphism classes of projective objects 
in $\M$ (it is a module over ${\rm Gr}(\A)$). As before, we have a natural map $\tau_\M: K_0(\M)\to {\rm Gr}(\M)$. 

It follows from the Frobenius-Perron theorem that there is a unique 
up to scaling element $R_\M\in K_0(\M)\otimes_{\Bbb Z}\Bbb R$ such that for all $X\in {\rm Gr}(\A)$, 
$XR_\M=\FPdim(X)R_\M$. Namely, we have
$R_\M=\sum_j \FPdim(M_j)R_j$, where $R_j$ are the projective covers of $M_j$. 
The numbers $\FPdim(M_j)$ are defined uniquely up to scaling by the 
property $$\FPdim(X\otimes M)=\FPdim(X)\FPdim(M),\,\,\,X\in \A,\, M\in \M,$$ and it is convenient to normalize them in such a way that $\FPdim(R_\M)=\FPdim(\A)$, which we
will do from now on (see \cite[Proposition 3.4.4, Exercise 7.16.8]{EGNO}, and \cite[Subsection 2.2]{ENO2} for the semisimple case). 
It is clear that $R_\A M_j=\FPdim(M_j)R_\M$. Note that using $\tau_\M$, we may view $R_\M$ also as an element of ${\rm Gr}(\M)$. 

Let $F: \A\to \End(\M)$ be the action functor. Then it is straightforward to verify that under the identification $\End(\M)\cong \M\boxtimes \M^\vee$ (see Subsection 2.5), 
we have $F(R_\A)=R_\M\boxtimes R_{\M^\vee}$. 

\subsection{Frobenius-Perron dimensions}

Let $\A$ be a finite tensor category, $\M$ an exact $\A$-module category, and $A$ an algebra in $\A$ such that $\M={\rm mod}-A$. 
In this case, we have two notions of Frobenius-Perron dimensions of an object $M\in \M$: 
the usual Frobenius-Perron dimension $\FPdim_\M(M)$ defined above, and also the Frobenius-Perron dimension 
of $M$ as a right $A$-module (viewed as an object of $\A$), which we will denote by 
$\FPdim_\A(M)$. We have 
$$
\FPdim_\A(X\otimes M)=\FPdim(X)\FPdim_\A(M),\ X\in \A ,
$$ 
so by the Frobenius-Perron theorem, there exists $\lambda>0$ such that 
$\FPdim_\A(M)=\lambda \FPdim_\M(M)$ for all $M\in \M$. 

Similarly, since $\A_\M^{*\rm op}=A-{\rm bimod}$, we have three notions of Frobenius-Perron dimension
of objects $Y\in \A_\M^{*\rm op}$: the usual Frobenius-Perron dimension $\FPdim_{\A_\M^{*\rm op}}(Y)$, the Frobenius-Perron 
dimension $\FPdim_\A(Y)$ of the corresponding $A$-bimodule as an object of $\A$, and also 
the Frobenius-Perron dimension of $Y$ as a right $A$-module, regarded as an object of $\M$. 
We will denote the latter one by $\FPdim_\M(Y)$. Since for any $M\in\M,Y\in \A_\M^{*\rm op}$, we have 
$$
\FPdim_\M(M\otimes Y)=\FPdim_\M(M)\FPdim_{\A_\M^{*\rm op}}(Y),
$$ 
we deduce that $\FPdim_\M(Z)\FPdim_{\A_\M^{*\rm op}}(Y)=\FPdim_\M(Z\otimes Y)$ for any 
$Y,Z\in \A_\M^{*\rm op}$. Thus, by the Frobenius-Perron theorem, there exists $\mu>0$ such that 
$\FPdim_\M(Y)=\mu \FPdim_{\A_\M^{*\rm op}}(Y)$ for all $Y\in \A_\M^{*\rm op}$. 
Also, as we showed above, $\FPdim_\A(Y)=\lambda \FPdim_\M(Y)$. 
Altogether, we get $\FPdim_\A(Y)=\lambda\mu \FPdim_{\A_\M^{*\rm op}}(Y)$. Now, plugging in $Y=\bold 1$, we get 
$\FPdim_\A(A)=\lambda\mu$ (since the unit object $\bold 1$ of $\A_\M^{*\rm op}$ is $A$ as an $A$-bimodule). Thus, we get 
\begin{equation}\label{reldim}
\FPdim_\A(Y)=\FPdim_\A(A)\FPdim_{\A_\M^{*\rm op}}(Y),\ Y\in \A_\M^{*\rm op}. 
\end{equation}  

\begin{proposition}\label{algint}  
Let $Y$ be any $A$-bimodule in $\A$. Then $\alpha:=\frac{\FPdim_\A(Y)}{\FPdim_\A(A)}$ is an algebraic integer, and $\alpha\ge 1$. 
\end{proposition}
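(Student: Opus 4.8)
The plan is to reduce the statement entirely to the standard Frobenius--Perron formalism in the finite tensor category $\A_\M^{*\op}=A\text{-bimod}$. The crucial observation is that the relation \eqref{reldim} already computes $\alpha$: dividing \eqref{reldim} by $\FPdim_\A(A)$ gives
$$
\alpha=\frac{\FPdim_\A(Y)}{\FPdim_\A(A)}=\FPdim_{\A_\M^{*\op}}(Y),
$$
so $\alpha$ is simply the ordinary Frobenius--Perron dimension of $Y$ regarded as an object of the finite tensor category $\D:=\A_\M^{*\op}$. It therefore suffices to prove, for an arbitrary nonzero object $Y$ of a finite tensor category $\D$, that $\FPdim_\D(Y)$ is an algebraic integer and is at least $1$. (Note that $\alpha\ge 1$ tacitly assumes $Y\ne 0$, since for $Y=0$ one has $\alpha=0$.)

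For algebraic integrality, I would write $[Y]=\sum_i m_i[S_i]$ in ${\rm Gr}(\D)$ with $m_i\in\Bbb Z_{\ge 0}$, where the $S_i$ are the simple objects of $\D$, and consider the matrix $N_Y$ of left multiplication by $[Y]$ on ${\rm Gr}(\D)$ in the basis of simples. Since the fusion coefficients of $\D$ are nonnegative integers, $N_Y$ is a nonnegative integer matrix, so its characteristic polynomial is monic with integer coefficients. By definition $\FPdim_\D(Y)$ is the Frobenius--Perron (largest) eigenvalue of $N_Y$, hence a root of this polynomial, and therefore an algebraic integer.

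For the lower bound I would use rigidity of $\D$. Since $Y\ne 0$, the coevaluation morphism $\mathbf{1}\to Y\otimes Y^*$ is nonzero, and because the unit object $\mathbf{1}$ of $\D$ is simple (as $\D$ is a tensor category, $\M$ being indecomposable), this morphism is a monomorphism; thus $\mathbf{1}$ is a subobject of $Y\otimes Y^*$. As $\FPdim_\D$ is additive on short exact sequences and nonnegative on all objects, monotonicity under subobjects yields $\FPdim_\D(Y\otimes Y^*)\ge \FPdim_\D(\mathbf{1})=1$. On the other hand, $\FPdim_\D$ is a character of ${\rm Gr}(\D)$ invariant under duality, so $\FPdim_\D(Y\otimes Y^*)=\FPdim_\D(Y)\FPdim_\D(Y^*)=\FPdim_\D(Y)^2$. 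Combining the two gives $\FPdim_\D(Y)^2\ge 1$, and since $\FPdim_\D(Y)>0$ we conclude $\alpha=\FPdim_\D(Y)\ge 1$.

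The only genuine content is the identification $\alpha=\FPdim_{\A_\M^{*\op}}(Y)$ coming from \eqref{reldim}; after that the argument is an appeal to the Frobenius--Perron theory for finite tensor categories, which applies precisely because $\A_\M^{*\op}$ is itself a finite tensor category. I do not expect a serious obstacle, the main point to verify being that \eqref{reldim} is exactly the relation needed and that the standard duality invariance $\FPdim_\D(Y^*)=\FPdim_\D(Y)$ and additivity/monotonicity hold in the possibly non-semisimple setting (which they do, as properties of the based ring ${\rm Gr}(\D)$).
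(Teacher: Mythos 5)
Your proof is correct and follows the same route as the paper's: both identify $\alpha$ with $\FPdim_{\A_\M^{*\rm op}}(Y)$ via \eqref{reldim} and then appeal to the standard Frobenius--Perron facts for the finite tensor category $\A_\M^{*\rm op}$. The paper leaves those standard facts (algebraic integrality, and the lower bound $\ge 1$ for nonzero objects) implicit, whereas you spell them out; your remark that $Y\ne 0$ is tacitly assumed is also apt.
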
 

\begin{proof}
By \eqref{reldim}, $\alpha=\FPdim_{\A_\M^{*\rm op}}(Y)$, 
which implies both statements.   
\end{proof} 

\subsection{Surjective monoidal functors}\label{behav} 

Let $\mathcal{A}\subseteq \mathcal{B}$ and $\mathcal{C}$ be finite tensor categories over $k$. Let $\mathcal{M}$ be an indecomposable exact 
$\mathcal{A}$-module category. Then $\C\boxtimes \M$ is naturally an exact module category over $\C^{\rm op}\boxtimes \A$. 
Assume that we are given an extension of the action of $\C^{\rm op}\boxtimes \A$ on $\C\boxtimes \M$ to an action of 
$\C^{\rm op}\boxtimes \B$. By Corollary \ref{subcaexact}, $\C\boxtimes \M$ is then an exact $\C^{\rm op}\boxtimes \B$-module category. 
Since the category of right exact $\C^{\rm op}$-linear endofunctors of $\C\boxtimes \M$ 
is $\C\boxtimes \End(\M)$, this extension is encoded in the action functor 
\begin{equation}\label{actionfunctor}
F:\mathcal{B}\to \mathcal{C}\boxtimes \End(\M). 
\end{equation}
It is easy to see that the functor $F$ is monoidal and exact,\footnote{It is clear that if $\M$ is semisimple then $\C\boxtimes \End(\M)$ is 
a multitensor category, and $F$ is a tensor functor.} and moreover for any $M\in \M$ the functor 
$F(?)\otimes (\bold 1\boxtimes M): \B\to \C\boxtimes \M$ is exact. In other words, we can naturally identify 
$\mathcal{C}\boxtimes \End(\M)$ with the category of right exact functors 
from $\M$ to $\C\boxtimes \M$, and upon this identification, 
the functor $F$ lands in the (non-abelian) subcategory of exact functors. 

Let ${\rm Ker}(F)$ be the subcategory of $\mathcal{B}$ mapped to $\End(\M)$ under $F$. 
It is clear that this is a tensor subcategory of $\B$ containing $\A$. 

Let us say that an object of a monoidal category is {\it fully dualizable}
if it admits left and right duals of all orders (see, e.g., \cite{L}). 
Clearly, tensor products and duals of fully dualizable objects are fully dualizable (i.e., fully dualizable objects form a rigid category). It is clear that the functor of tensor product on either side with a fully dualizable object is exact (as it admits a left and right adjoint). 

It is clear that for any $X\in \B$, $F(X)$ is fully dualizable (namely, $F(X)^*=F(X^*)$ and ${}^*F(X)=F({}^*X)$). 

\begin{lemma}\label{projee} If $Z$ is fully dualizable and $Q$ is projective in $\C\boxtimes \End(\M)$, then $Q\otimes Z$ and $Z\otimes Q$ are projective.  
\end{lemma}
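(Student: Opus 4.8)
The plan is to prove that tensoring a projective object $Q \in \C \boxtimes \End(\M)$ on either side by a fully dualizable object $Z$ yields a projective object. The key observation is that projectivity can be detected via the $\Hom$ functor, and that tensoring with a fully dualizable object admits adjoints given by tensoring with the duals. Recall that $Q$ is projective precisely when $\Hom(Q, -)$ is exact. So to show $Q \otimes Z$ is projective, I would show that the functor $\Hom(Q \otimes Z, -)$ is exact.

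\smallskip

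First I would use the adjunction afforded by full dualizability. Since $Z$ is fully dualizable, the functor $(-) \otimes Z$ has a right adjoint, namely $(-) \otimes Z^*$ (and a left adjoint $(-) \otimes {}^*Z$); as noted in the excerpt, these adjoints make tensoring with $Z$ exact. Using the right adjoint, I would write, for any object $W$,
$$
\Hom(Q \otimes Z, W) \cong \Hom(Q, W \otimes Z^*).
$$
Now $\Hom(Q, -)$ is exact because $Q$ is projective, and the functor $W \mapsto W \otimes Z^*$ is exact because $Z^*$ is again fully dualizable. Hence the composite $W \mapsto \Hom(Q, W \otimes Z^*) \cong \Hom(Q \otimes Z, W)$ is exact as a composition of exact functors, which is exactly the statement that $Q \otimes Z$ is projective. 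The argument for $Z \otimes Q$ is symmetric, using instead the isomorphism $\Hom(Z \otimes Q, W) \cong \Hom(Q, {}^*Z \otimes W)$ coming from the left adjoint of $Z \otimes (-)$, together with the projectivity of $Q$ and the exactness of tensoring with ${}^*Z$.

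\smallskip

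The main point requiring care is the precise form of the adjunctions in the monoidal category $\C \boxtimes \End(\M)$, which is \emph{not} rigid when $\M$ is nonsemisimple and whose tensor product need not be exact in general. The existence of the adjoints $(-)\otimes Z^*$ and ${}^*Z \otimes (-)$ relies only on the full dualizability of $Z$ (and its duals), not on rigidity of the whole category, so the adjunction isomorphisms above are legitimate; this is precisely the reason the hypothesis is stated in terms of full dualizability of $Z$ rather than projectivity or some other property. I therefore expect the only genuine obstacle to be bookkeeping: verifying that the natural adjunction isomorphisms are functorial in $W$ so that the displayed identifications are isomorphisms of functors (and hence transport exactness), rather than merely isomorphisms for each fixed $W$. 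Once this functoriality is in place, the exactness of $\Hom(Q \otimes Z, -)$ and $\Hom(Z \otimes Q, -)$ follows immediately, completing the proof.
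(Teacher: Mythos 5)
Your proof is correct and follows exactly the paper's argument: both use the adjunction isomorphisms $\Hom(Q\otimes Z,-)\cong\Hom(Q,-\otimes Z^*)$ and $\Hom(Z\otimes Q,-)\cong\Hom(Q,{}^*Z\otimes -)$, together with the exactness of tensoring with a fully dualizable object, to conclude that these Hom functors are exact. You have merely spelled out the composition-of-exact-functors step that the paper leaves implicit.
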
  

\begin{proof}
Since we have that $\Hom(Q\otimes Z,Y)=\Hom(Q,Y\otimes Z^*)$ and $\Hom(Z\otimes Q,Y)=\Hom(Q,{}^*Z\otimes Y)$, both functors are exact. 
\end{proof} 

The following theorem generalizes \cite[Theorem 2.5]{EO} 
(recovered in the special case $\M=\Vect$). 

\begin{theorem}\label{improj} 
Let $F$ be as in (\ref{actionfunctor}). If $F$ is surjective then 

(i) $F$ maps projective objects to projective ones. 
 
(ii) $\C\boxtimes \M$ is an exact module category over $\B$. 
\end{theorem}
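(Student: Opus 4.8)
The plan is to prove (i) first and then deduce (ii) from it together with Lemma~\ref{critex}; the essential content is (i), and within (i) almost everything is formal except one Frobenius--Perron estimate.

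\emph{Reduction of (i).} I would first reduce to a single object: it suffices to show that $F(P_0)$ is projective, where $P_0:=P(\mathbf 1_\B)$ is the projective cover of the unit in $\B$. Granting this, for each simple $X_i\in\B$ the object $F(P_0)\otimes F(X_i)=F(P_0\otimes X_i)$ is projective by Lemma~\ref{projee}, since $F(P_0)$ is projective and $F(X_i)$ is fully dualizable. On the other hand $P_0\otimes X_i$ is projective in $\B$ and surjects onto $\mathbf 1\otimes X_i=X_i$, so the indecomposable projective $P(X_i)$ is a direct summand of $P_0\otimes X_i$; hence $F(P(X_i))$ is a direct summand of the projective object $F(P_0\otimes X_i)$, and is therefore projective. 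As every projective of $\B$ is a summand of a direct sum of the $P(X_i)$, this yields (i).

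\emph{The core estimate (main obstacle).} To prove that $F(P_0)$ is projective I would combine surjectivity with Frobenius--Perron dimensions. Projectives and injectives coincide in $\C\boxtimes\End(\M)$ (as they do in $\C$ and in $\M,\M^\vee$), so Lemma~\ref{eqcon} shows that every projective object of $\C\boxtimes\End(\M)$ is a direct summand of some $F(X)$; this is the ``seed'' placing $\N$'s projectives inside images. Next, $\FPdim_{\C\boxtimes\End(\M)}\circ F$ is a character of ${\rm Gr}(\B)$ taking positive values on objects, hence by uniqueness of the Frobenius--Perron character it equals $\FPdim_\B$; consequently $F(R_\B)$ is, for left multiplication by every $[F(X)]$, a common eigenvector in ${\rm Gr}(\C\boxtimes\End(\M))\otimes\mathbb{R}$ with eigenvalue $\FPdim_\B(X)$, and using surjectivity one checks that its support is all of the simple objects. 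The hard step, which I expect to be the real obstacle, is to detect projectivity from this data: I would compare the nonnegative matrix recording multiplicities attached to $F(P_0)$ with that attached to its projective cover, and invoke Lemma~\ref{lal} to rule out the strict drop of the top eigenvalue that a nonzero kernel (i.e.\ a nonprojective part of $F(P_0)$) would force. The genuine difficulty is that $\C\boxtimes\End(\M)$ is not rigid when $\M$ is nonsemisimple, so projectivity cannot be propagated by tensoring except through full dualizability; arranging the Frobenius--Perron bookkeeping in this non-rigid target so that Lemma~\ref{lal} applies is where the work concentrates.

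\emph{Part (ii).} With (i) in hand, $F(P_0)$ is projective in $\C\boxtimes\End(\M)$. Now $\C\boxtimes\M$ is an exact module category over $\C\boxtimes\End(\M)$: a projective object of $\End(\M)\cong\M\boxtimes\M^\vee$ has the form $R\boxtimes R'$ with $R\in\M$ projective, and the corresponding endofunctor sends every $M$ to a direct sum of copies of $R$, hence to a projective object of $\M$; tensoring with the regular $\C$-action preserves projectivity in the $\C$-factor. Therefore $F(P_0)\otimes(c\boxtimes m)$ is projective in $\C\boxtimes\M$ for every object $c\boxtimes m$. Since $P_0\neq 0$ and the $\B$-action on $\C\boxtimes\M$ is $P_0\otimes(c\boxtimes m)=F(P_0)\otimes(c\boxtimes m)$, Lemma~\ref{critex}, applied to the finite tensor category $\B$ and its module category $\C\boxtimes\M$, shows that $\C\boxtimes\M$ is an exact $\B$-module category, proving (ii).

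I note that (ii) cannot simply be fed back to recover (i): the action of $\C\boxtimes\End(\M)$ on $\C\boxtimes\M$ does not detect projectivity in the $\M^\vee$-factor, so statement (i) is genuinely stronger and must be established through the Frobenius--Perron estimate above rather than through exactness of the module category alone.
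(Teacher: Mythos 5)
Your part (ii) and your reduction of (i) to the indecomposable projectives $P(X_i)$ are fine, and you have correctly assembled the outer ingredients of (i): the surjectivity ``seed'' (every projective-injective $Q$ of $\C\boxtimes\End(\M)$ is a direct summand of some $F(P)$ via Lemma~\ref{eqcon}), the Frobenius--Perron eigenvector $F(R_\B)$, and the intention to close the argument with Lemma~\ref{lal}. But the core of (i) is exactly the step you defer (``the hard step, which I expect to be the real obstacle\dots is where the work concentrates''), and what you sketch there --- comparing ``the matrix recording multiplicities attached to $F(P_0)$ with that attached to its projective cover'' --- is not an argument: it is not specified which nonnegative matrices are being compared, why one dominates the other entrywise, or why their top eigenvalues coincide, which is precisely the configuration Lemma~\ref{lal} needs. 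The missing idea in the paper's proof is to split \emph{each} $F(P_i)=T_i\oplus N_i$ into its projective part $T_i$ and projective-free part $N_i$, and to use full dualizability of the objects $F(P_j)$ together with Lemma~\ref{projee} to show that the projective-free parts are sub-multiplicative: $N_i\otimes N_j$ contains $\oplus_r c_{ij}^r N_r$ as a direct summand, where $P_i\otimes P_j=\oplus_r c_{ij}^rP_r$. Weighting by the Frobenius--Perron dimensions $d_j$ then shows that if the $N_i$ are not all zero, the matrix of $[\oplus_i N_i]$ acting on ${\rm Gr}(\C\boxtimes\M)$ has top eigenvalue at least that of $[\oplus_i F(P_i)]$, so Lemma~\ref{lal} forces $N_i=F(P_i)$ for all $i$, i.e.\ \emph{no} $F(P_i)$ has a projective summand --- which contradicts your surjectivity seed. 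Without this device the eigenvalue coincidence by itself does not ``detect projectivity''.

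A second, smaller point: your reduction to the single object $P_0=P(\mathbf 1_\B)$ does not actually simplify the hard step. The dichotomy ``all $N_i=0$ or all $N_i\ne 0$'' and the entrywise positivity needed for Lemma~\ref{lal} come from $\sum_i c_{ij}^r>0$ for all $j,r$, which requires summing over \emph{all} indecomposable projectives; one cannot run the argument for $P_0$ in isolation, since $c_{00}^r$ need not be positive. So even to prove that $F(P_0)$ alone is projective you are forced into the simultaneous argument with all the $P_i$, at which point the reduction buys nothing. Your part (ii) (projectives of $\C\boxtimes\End(\M)$ act by projectives on $\C\boxtimes\M$, then apply Lemma~\ref{critex}) matches the paper's argument in substance and is correct once (i) is established.
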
 

\begin{proof} (i) The proof mimicks \cite[Subsections 2.5, 2.6]{EO}. 

Let $P_i$ be the indecomposable projectives of $\B$, and let $S_i:=F(P_i)$. Write $S_i=T_i\oplus N_i$, where $T_i$ is projective, and $N_i$ has no projective direct summands. 
Our job is to show that $N_i=0$ for all $i$. 

Let $P_i\otimes P_j=\oplus_r c_{ij}^r P_r$. Then by Lemma \ref{projee}, $N_i\otimes N_j$ contains $\oplus_r c_{ij}^r N_r$ as a direct summand, so 
$(\oplus_i N_i)\otimes N_j$ contains $\oplus_r (\sum_i c_{ij}^r) N_r$ as a direct summand. Since $\sum_i c_{ij}^r>0$ for all $j,r$, 
we see that either $N_i$ are all zero (which is what we want to show), or they  are all nonzero. So let us assume for the sake of contradiction 
that $N_i\ne 0$ for all $i$. 

For any $j$ we have 
$$
(\oplus_i N_i)\otimes N_j\supseteq \bigoplus_r (\sum_i c_{ij}^r) N_r.
$$
Let $X_j$ be the simples of $\B$, and $d_j$ be their Frobenius-Perron dimensions. 
For any $i$, $r$, we have $\sum_j d_jc_{ij}^r=D_id_r$, where $D_i:=\FPdim(P_i)$. 
Thus, multiplying the latter inclusion by $d_j$ and summing over $j$,
we get an entry-wise inequality of matrices (acting on ${\rm Gr}(\C\boxtimes \M)$)
$$
[\oplus_i N_i]\sum_j d_j[N_j]\ge \left(\sum_i D_i\right)\sum_r d_r[N_r].
$$
This implies that the largest eigenvalue of the matrix of $[\oplus_i N_i]$ 
is at least $\sum_i D_i$, which is the same as the largest eigenvalue of $[\oplus_i F(P_i)]$.
By Lemma \ref{lal}, this implies that $N_i=F(P_i)$ for all $i$. Thus, $F(P_i)$ has no projective direct
summands for all $i$. 

However, let $Q$ be an indecomposable projective object in $\C\boxtimes \End(\M)$. 
Then $Q$ is injective by the quasi-Frobenius property. 
Since $F$ is surjective, $Q$ is a subquotient, hence (using Lemma \ref{eqcon}) a direct summand 
of $F(P)$ for some projective $P\in \B$. Hence $Q$ is a direct summand 
of $F(P_i)$ for some $i$, which gives the desired contradiction. 

(ii) By (i), we have 
\begin{equation}\label{fpi}
F(P_i)=\bigoplus_{s,l,r} a_{islr}Q_s\boxtimes R_l\boxtimes R_r^\vee, 
\end{equation} 
where $Q_s,R_l,R_r^\vee$ are the indecomposable projectives of $\C$, $\M$, $\M^\vee$.
Let $Y_t$ be the simples of $\C$, and $M_j$ be the simples of $\M$. Then 
$$F(P_i)\otimes (Y_t\boxtimes M_j)=\bigoplus_{s,l}a_{islj}(Q_s\otimes Y_t)\boxtimes R_l,$$ which is clearly projective. 
This implies (ii). 
\end{proof}

\begin{proposition}\label{regob} The regular object of $\C\boxtimes \M$ as a $\B$-module category is $\alpha \cdot R_\C\boxtimes R_\M$, 
where  $\alpha=\frac{\FPdim(\B)}{\FPdim(\A)\FPdim(\C)}$.
\end{proposition}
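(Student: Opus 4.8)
The plan is to identify the regular object $R_\N$ of $\N:=\C\boxtimes\M$, viewed as a $\B$-module category, by first showing it is proportional to $R_\C\boxtimes R_\M$ and then fixing the scalar by a Frobenius--Perron dimension count. By Theorem~\ref{improj}(ii) the category $\N$ is an exact $\B$-module category, and by part (i) its indecomposable projectives are the $Q_t\boxtimes R_j$ while its simples are the $Y_t\boxtimes M_j$; thus $R_\N$ is the unique-up-to-scale positive element of $K_0(\N)\otimes_{\Bbb Z}\Bbb R$ on which ${\rm Gr}(\B)$ acts through $\FPdim$, normalized (with respect to the Frobenius--Perron dimension of $\C\boxtimes\M$, which is the product of those of $\C$ and $\M$) by $\FPdim(R_\N)=\FPdim(\B)$.

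For the proportionality I would use the commuting action of $\C^{\op}$. As a module category over the subcategory $\C^{\op}\boxtimes\A\subseteq\C^{\op}\boxtimes\B$, the category $\N$ is the Deligne product of the regular $\C^{\op}$-module category $\C$ and the $\A$-module category $\M$; it is therefore indecomposable and exact, and its regular object is $R_\C\boxtimes R_\M$, which spans the one-dimensional $\FPdim$-eigenspace $E$ for the $\C^{\op}\boxtimes\A$-action. Over the larger category $\C^{\op}\boxtimes\B$ the category $\N$ is still exact (Corollary~\ref{subcaexact}) and still indecomposable (any $\C^{\op}\boxtimes\B$-decomposition restricts to a $\C^{\op}\boxtimes\A$-decomposition), so it has a regular object $R'$. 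Since Frobenius--Perron dimensions are unchanged under the inclusion $\C^{\op}\boxtimes\A\hookrightarrow\C^{\op}\boxtimes\B$, the eigenvector $R'$ again lies in $E$, whence $R'\propto R_\C\boxtimes R_\M$; restricting the eigenvector identity satisfied by $R'$ to the subcategory $\be\boxtimes\B\cong\B$ shows that $R_\C\boxtimes R_\M$ is itself a $\FPdim$-eigenvector for the $\B$-action.

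It remains to see that the $\B$-eigenspace is one-dimensional, so that this eigenvector is the regular one, and this is where surjectivity of $F$ enters: any decomposition $\N=\N_1\oplus\N_2$ of $\B$-module categories would be preserved by every endofunctor $F(X)$, hence by every object of $\C\boxtimes\End(\M)$, since subquotients of decomposition-preserving functors again preserve the decomposition and (by surjectivity of $F$) every object of $\C\boxtimes\End(\M)$ is such a subquotient; as this is the whole category of right exact $\C^{\op}$-linear endofunctors of $\N$, which is indecomposable, one of the $\N_i$ must vanish. Thus $R_\N\propto R_\C\boxtimes R_\M$, and since $\FPdim(R_\C\boxtimes R_\M)=\FPdim(R_\C)\FPdim(R_\M)=\FPdim(\C)\FPdim(\A)$ while $\FPdim(R_\N)=\FPdim(\B)$, comparing the two gives $R_\N=\alpha\,R_\C\boxtimes R_\M$ with $\alpha=\FPdim(\B)/(\FPdim(\A)\FPdim(\C))$. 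The main obstacle is the proportionality itself --- in particular producing a genuine $\B$-eigenvector out of $R_\C\boxtimes R_\M$ (handled by passing to $\C^{\op}\boxtimes\B$ and restricting) and the one-dimensionality of the $\B$-eigenspace (which rests on the surjectivity of $F$); the final dimension count is routine.
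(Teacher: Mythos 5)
Your proof is correct and follows essentially the same route as the paper: pass to the extended action of $\C^{\rm op}\boxtimes\B$, observe that the regular object over the subcategory $\C^{\rm op}\boxtimes\A$ is $R_\C\boxtimes R_\M$, and fix the scalar by the normalization $\FPdim(R_{\C\boxtimes\M})=\FPdim(\B)$. The only difference is that you spell out the indecomposability of $\C\boxtimes\M$ over $\B$ (via surjectivity of $F$), a point the paper's one-line proof leaves implicit.
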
 

\begin{proof}
The action of $\B$ on $\C\boxtimes\M$ extends to an action of $\C^{\rm op}\boxtimes \B$, so 
the regular object is the same for the two actions, up to scaling. 
But $\C^{\rm op}\boxtimes \B$ has a tensor subcategory $\C^{\rm op}\boxtimes \A$, 
and for the action of this subcategory, the regular object is clearly $R_\C\boxtimes R_\M$. 
This implies the statement (after recalculating normalizations).  
\end{proof} 

\subsection{Behavior of regular objects under surjective monoidal functors}

Retain the setup of Subsection 2.7, and let $F$ be as in (\ref{actionfunctor}).

\begin{theorem}\label{Fofreg}
If $F$ is surjective then 

(i) $F(R_\B)=\alpha\cdot R_\C\boxtimes F(R_\A)=\alpha \cdot R_\C\boxtimes R_\M\boxtimes R_{\M^\vee}$, in \linebreak
$K_0(\C\boxtimes \End(\M))\otimes_{\Bbb Z}\Bbb R$, where  $\alpha=\frac{\FPdim(\B)}{\FPdim(\A)\FPdim(\C)}$.

(ii) $F(R_\B)$ contains as a direct summand $Q\boxtimes F(R_\A)$, where $Q$ is the projective cover of $\bold 1$ in $\C$.
\footnote{Here and below, if $R,S$ are formal linear combinations of projectives in some finite abelian category with nonnegative real coefficients, then we say that 
$R$ contains $S$ as a direct summand if $R-S$ has nonnegative coefficients.} 
 
(iii) $\alpha\ge 1$. 

(iv) Let $M,N\in \M$, and $\underline{\Hom}_\B(\bold 1\boxtimes M,\bold 1\boxtimes N)\in \B$ be the internal Hom from $\bold 1\boxtimes M$ to  
$\bold 1\boxtimes N$ in the $\B$-module category $\C\boxtimes \M$. Then 
$$
\FPdim(\underline{\Hom}_\B(\bold 1\boxtimes M,\bold 1\boxtimes N))=\alpha \FPdim(M)\FPdim(N). 
$$
\end{theorem}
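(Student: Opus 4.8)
The plan is to prove the four parts in the given order, since each feeds into the next. For part (i), I would start from the fact that $R_\B$ is characterized (up to scaling) as the virtual projective satisfying $X R_\B = \FPdim(X) R_\B$ for all $X \in \Gr(\B)$. Applying the monoidal functor $F$, which by Theorem \ref{improj}(i) sends projectives to projectives, I expect that $F(R_\B)$ satisfies the analogous eigenvector equation for the action of $F(\Gr(\B))$ on $K_0(\C \boxtimes \End(\M))$. The key point is that $F(R_\B)$ must be proportional to the regular object of $\C \boxtimes \M$ as a $\B$-module category, which by Proposition \ref{regob} is $\alpha \cdot R_\C \boxtimes R_\M$. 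Pinning down that the proportionality constant is exactly $\alpha$ (and not some other scalar) should come from comparing Frobenius-Perron dimensions on both sides, using $\FPdim(R_\B) = \FPdim(\B)$ and the multiplicativity of $\FPdim$ under $F$ and under $\boxtimes$. The second equality in (i) is then just the identity $F(R_\A) = R_\M \boxtimes R_{\M^\vee}$ recorded at the end of Subsection 2.6, together with the observation that applying $F$ to $R_\A \in \A \subseteq \B$ lands in $\End(\M)$.

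For part (ii), I would examine the explicit decomposition of $F$ on indecomposable projectives from equation (\ref{fpi}) and track the $\C$-component. The regular object $R_\B$ is a nonnegative combination $\sum_i \FPdim(X_i) P_i$ of the indecomposable projectives $P_i$ of $\B$, so $F(R_\B)$ is the corresponding combination of the $F(P_i)$. Since $Q$, the projective cover of $\bold 1$ in $\C$, is a summand of $R_\C$, part (i) already guarantees that $Q \boxtimes (R_\M \boxtimes R_{\M^\vee}) = Q \boxtimes F(R_\A)$ appears with nonnegative coefficient in $\alpha \cdot R_\C \boxtimes F(R_\A)$; the content is to check that the coefficient of $Q$ in $R_\C$ is at least $1/\alpha$ so that this summand genuinely occurs. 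I would argue this by noting that $R_\C$ contains $\FPdim(\bold 1_\C) Q = Q$ with coefficient one in the appropriate normalization, so $\alpha \cdot R_\C$ contains $\alpha \cdot Q$, and hence $Q \boxtimes F(R_\A)$ is a summand as soon as $\alpha \ge 1$.

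Part (iii), that $\alpha \ge 1$, I expect to be the main obstacle, since it is the only genuinely inequality-type statement and cannot be extracted purely from the eigenvector bookkeeping of (i). The natural strategy is to connect $\alpha$ to Proposition \ref{algint}: by definition $\alpha = \frac{\FPdim(\B)}{\FPdim(\A)\FPdim(\C)}$, and I would try to realize this ratio as $\frac{\FPdim_\A(Y)}{\FPdim_\A(A)}$ for a suitable $A$-bimodule $Y$, where $A$ is an algebra in $\A$ with $\M = {\rm mod}\text{-}A$. Concretely, I would look to interpret $\B$ itself, or the relevant internal-Hom object, as such a bimodule via the identification of $\C \boxtimes \End(\M)$ with exact functors $\M \to \C \boxtimes \M$; Proposition \ref{algint} then yields both that $\alpha$ is an algebraic integer and that $\alpha \ge 1$. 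The delicate part is setting up the correct bimodule so that the dimension ratio matches $\alpha$ exactly, and this is where I anticipate spending the most care.

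Finally, for part (iv), I would compute $\FPdim$ of the internal Hom $\underline{\Hom}_\B(\bold 1 \boxtimes M, \bold 1 \boxtimes N)$ directly. Using Theorem \ref{improj}(ii), the category $\C \boxtimes \M$ is an exact $\B$-module category, so internal Homs are well-defined and the regular object $R_{\C\boxtimes\M} = \alpha \cdot R_\C \boxtimes R_\M$ controls Frobenius-Perron dimensions. The standard identity relating $\FPdim$ of an internal Hom to the product of dimensions of its arguments, combined with the normalization $\FPdim(R_{\C\boxtimes\M}) = \FPdim(\B)$ from Proposition \ref{regob} and the multiplicativity $\FPdim(X \otimes (\bold 1 \boxtimes M)) = \FPdim(X)\FPdim(M)$, should give the factor $\alpha$ explicitly. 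I would reduce the general $M, N$ case to the regular object computation and read off the constant $\alpha \FPdim(M) \FPdim(N)$ by matching normalizations.
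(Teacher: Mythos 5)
Your parts (i) and (iv) follow essentially the paper's route, but the logical structure of (ii) and (iii) contains a genuine circularity. Given (i), the coefficient of $Q$ in $\alpha\cdot R_\C$ is exactly $\alpha$ (since $Q$ enters $R_\C$ with coefficient $\FPdim(\bold 1)=1$), so your derivation of (ii) is \emph{equivalent} to the inequality $\alpha\ge 1$: you are deducing (ii) from (iii). But your proof of (iii) is only a pointer to Proposition \ref{algint}, with the crucial step (``setting up the correct bimodule so that the dimension ratio matches $\alpha$'') explicitly deferred. That step is not routine: the realization of $\alpha$ as $\FPdim_\A(A')/\FPdim_\A(A)$ with $A=\underline{\End}_\A(M)$ and $A'=\underline{\End}_\B(\bold 1\boxtimes M)$ is precisely Proposition \ref{algint1}, whose proof in the paper uses part (iv) of the present theorem to get $\FPdim(A')=\alpha\FPdim(M)^2$, and also needs Proposition \ref{bamexact} (exactness of $\B\boxtimes_\A\M$ as a $\B$-module category) before Proposition \ref{algint} can be invoked. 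So with your stated order (i), (ii), (iii), (iv), ``each feeding into the next,'' the argument does not close up; at best it could be repaired by proving (iv) first (which indeed needs only (i) and equation (\ref{fpi})) and importing the duality machinery of Subsection 2.9.

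The paper avoids all of this with a short direct proof of (ii) that you are missing: for each simple $X_i\in\A$, the surjection from its projective cover $P_i$ in $\B$ onto its projective cover $P_i'$ in $\A$ yields a surjection $F(P_i)\to F(P_i')$; by Theorem \ref{improj}(i), $F(P_i')$ is projective in $\End(\M)$, so $Q\boxtimes F(P_i')$ is its projective cover in $\C\boxtimes\End(\M)$ and hence splits off the projective object $F(P_i)$. Summing against $\FPdim(X_i)$ gives (ii) with no reference to $\alpha$, and then (iii) is the one-line consequence of (i) and (ii) that you have in effect run backwards. I recommend adopting that order of deduction; your Proposition-\ref{algint}-based route to (iii) is viable (the paper records it as an alternative proof in the footnote to Proposition \ref{algint1}), but only once (iv) and Proposition \ref{bamexact} are already in place.
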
 

\begin{proof}
(i) Clearly, $F(R_\B)$ is an eigenvector of left multiplication by $F(X)$, $X\in \B$, with eigenvalue $\FPdim(X)$, 
and under right multiplication by $F(Z)$, $Z\in \A$, with eigenvalue $\FPdim(Z)$. 
Thus the statement follows from Theorem \ref{improj}, Proposition \ref{regob},  and comparisons of largest eigenvalues.  

(ii) For each simple object $X_i$ of $\A$, let $P_i'$ be its projective cover in $\A$, and let $P_i$ be its projective cover in $\B$.  
We have a surjective morphism $\phi_i: P_i\to P_i'$, and hence a surjective morphism $F(\phi_i): F(P_i)\to F(P_i')$.  
But by Theorem \ref{improj}, $F(P_i')$ is a projective object in $\End(\M)$. Thus, 
$F(P_i)$ contains $Q\boxtimes F(P_i')$ as a direct summand. Therefore, setting $d_i:=\FPdim(X_i)$, we get that $F(R_\B)$ contains 
$$\sum_{i: X_i\in \A}d_iQ\boxtimes F(P_i')=Q\boxtimes F(R_\A)$$ as a direct summand. 

(iii) Since $R_\C$ contains $Q$ as a summand with multiplicity $1$, we get from (i) and (ii) 
that $\alpha\ge 1$. 

(iv) We keep the above notation. By \eqref{fpi}, we have
$$
F(P_i)\otimes (\bold 1\boxtimes M)=\bigoplus_{s,l} \left(\sum_r a_{islr}[M:M_r]\right)Q_s\boxtimes R_l.
$$
Hence, for all $i$
\begin{eqnarray*}
\lefteqn{[\underline{\rm Hom}_\B(\bold 1\boxtimes M,\bold 1\boxtimes N):X_i]
}\\
& = & \dim\Hom(P_i,\underline{\rm Hom}_\B(\bold 1\boxtimes M,\bold 1\boxtimes N))\\
& = & 
\dim\Hom(F(P_i)\otimes (\bold 1\boxtimes M),\bold 1\boxtimes N)\\
& = & \sum_{l,r}a_{i0lr}[N:M_l][M:M_r]
\end{eqnarray*}
(where $Q_0=Q$). Multiplying this by $d_i=\FPdim(X_i)$ and summing over $i$, we get 
$$
\FPdim(\underline{\rm Hom}_\B(\bold 1\boxtimes M,\bold 1\boxtimes N))=\sum_{i,l,r}d_ia_{i0lr}[N:M_l][M:M_r]. 
$$
But 
$\sum_i d_ia_{islr}$ is the coefficient of $Q_s\boxtimes R_l\boxtimes R_r^\vee$ in $F(R_\B)$. 
So, by (i), we have 
$$
\sum_i d_ia_{i0lr}=\alpha \FPdim(M_l)\FPdim(M_r).
$$
Thus, we get 
\begin{eqnarray*}
\lefteqn{\FPdim(\underline{\Hom}_\B(\bold 1\boxtimes M,\bold 1\boxtimes N))}\\
& = & \alpha\sum_{l,r}\FPdim(M_l)\FPdim(M_r)[N:M_l][M:M_r]\\
& = & \alpha \FPdim(M)\FPdim(N),
\end{eqnarray*}
as desired.
\end{proof} 

\subsection{Duality} 
Let $\A$, $\B$, $\C$, $\M$ be as above. Let
\begin{equation}\label{ES}
\mathcal{A}\xrightarrow{\iota} \mathcal{B}\xrightarrow{F} \mathcal{C}\boxtimes \End(\M)
\end{equation}
be a sequence of tensor functors, such that $\iota$ is injective, $\A\subseteq \Ker(F)$ (i.e., $F(\A)\subseteq \End(\M)$), and $F$ is surjective. 
Let $\mathcal{N}$ be an indecomposable exact module category over $\mathcal{C}$. 
Note that $\mathcal{N}\boxtimes \mathcal{M}$ is an exact module category over 
$\mathcal{C}\boxtimes \End(\M)$, and $(\mathcal{C}\boxtimes \End(\M))_{\mathcal{N}\boxtimes \mathcal{M}}^*=\mathcal{C}_{\mathcal{N}}^*$.

Consider the dual sequence to (\ref{ES}) with respect to $\N\boxtimes \M$: 
\begin{equation}\label{ES1}
\mathcal{A}_{\mathcal{M}}^*\boxtimes \End(\mathcal{N})\xleftarrow{\iota^*} \mathcal{B}_{\mathcal{N}\boxtimes \mathcal{M}}^*\xleftarrow{F^*} \mathcal{C}_{\mathcal{N}}^*.
\end{equation}

It is clear that $\iota^*$ and $F^*$ are exact monoidal functors. 

\begin{proposition}\label{dualseq} 
(i) $\B_{\N\boxtimes \M}^*$ is a tensor category. 

(ii) $\C_\N^*\subseteq \Ker(\iota^*)$. 

(iii) $F^*$ is injective. 

(iv) $\iota^*$ is surjective.  
\end{proposition}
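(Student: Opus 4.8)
The plan is to prove the four parts of Proposition \ref{dualseq} by dualizing the corresponding properties of the original sequence \eqref{ES}, using the dictionary between tensor functors and module category structures together with the Frobenius--Perron dimension count of Theorem \ref{Fofreg}. The key conceptual point is that dualization reverses the roles of ``injective'' and ``surjective'' (and of ``kernel condition''), so each part of the proposition should correspond to a property we have already established. Concretely, I would first set up the bimodule-category framework: the functor $F:\B\to \C\boxtimes\End(\M)$ encodes an action of $\B$ on $\C\boxtimes\M$ extending the $\C^{\rm op}\boxtimes\A$-action, and the dual functor $F^*:\C_\N^*\to \B_{\N\boxtimes\M}^*$ is obtained by viewing $\C_\N^*$-endofunctors of $\N$ as acting on $\N\boxtimes\M$ and hence, via $F$, as $\B$-module endofunctors of $\C\boxtimes\M$ restricted to $\N\boxtimes\M$. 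The identity $(\C\boxtimes\End(\M))^*_{\N\boxtimes\M}=\C_\N^*$ recorded just before the statement is what makes these duals live in the right categories.

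For part (i), that $\B_{\N\boxtimes\M}^*$ is a genuine tensor (rigid) category, I would invoke Theorem \ref{improj}(ii): since $F$ is surjective, $\C\boxtimes\M$ is an exact $\B$-module category, and the dual of a finite tensor category with respect to an exact indecomposable module category is again a finite tensor category (by the standard theory of \cite{EO}). One must check indecomposability of $\N\boxtimes\M$ as a $\B$-module category, which follows because $\N\boxtimes\M$ is indecomposable over the larger category $\C\boxtimes\End(\M)$ (as $\N$, $\M$ are indecomposable) and restriction along a surjective functor preserves this. For part (ii), $\C_\N^*\subseteq\Ker(\iota^*)$ means that objects of $\C_\N^*$, when pushed into $\B_{\N\boxtimes\M}^*$ by $F^*$ and then into $\A_\M^*\boxtimes\End(\N)$ by $\iota^*$, land in $\End(\N)$; this is a direct diagram-chase through the definitions, using that $F^*$ comes from the $\C$-direction which is acted on trivially by $\A=\Ker(F)$, so an $\A_\M^*$-endofunctor arising from $\C_\N^*$ must be the identity functor tensored with an $\End(\N)$-endofunctor.

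For parts (iii) and (iv) — the injectivity of $F^*$ and surjectivity of $\iota^*$ — I expect the main work, and I would prove them via Frobenius--Perron dimension counting. Surjectivity and injectivity of the original functors, combined with Proposition \ref{regob} and Theorem \ref{Fofreg}, give $\FPdim(\B)=\FPdim(\A)\FPdim(\C)\cdot\alpha$ with $\alpha\ge 1$, and similarly one computes the FP-dimensions of the dual categories via the general relation $\FPdim(\A_\M^*)=\FPdim(\A)$ and its analogues. The strategy is to show that the dimension identity forces $F^*$ to be fully faithful (injectivity) and $\iota^*$ to hit everything (surjectivity): once one knows the dual sequence has the same numerical invariants, the characterization of surjective functors via behavior of regular objects (the machinery of Subsection \ref{behav}, specialized to the dual action) pins down the image. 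Injectivity of $F^*$ should follow from the fact that $F^*$ is a tensor functor between tensor categories that is nonzero on simples, together with a dimension comparison showing no collapsing occurs; alternatively, one identifies $\Ker(F^*)$ and shows by the dimension count that it is trivial.

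The hard part will be part (iv), the surjectivity of $\iota^*$: unlike injectivity, which is essentially a faithfulness statement controllable by dimensions, surjectivity requires producing, for every object of $\A_\M^*\boxtimes\End(\N)$, a preimage (up to subquotient) under $\iota^*$ in $\B_{\N\boxtimes\M}^*$. I anticipate proving this by showing that the image of $\iota^*$ is a tensor subcategory whose FP-dimension equals that of the whole target $\A_\M^*\boxtimes\End(\N)=\FPdim(\A)\FPdim(\End(\N))$, which by \eqref{reldim}-type bookkeeping and Theorem \ref{Fofreg}(iii) matches $\FPdim(\B_{\N\boxtimes\M}^*)/\FPdim(\C_\N^*)$; a tensor subcategory of full FP-dimension must be the whole category. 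The delicate point is that $\End(\N)$ and $\End(\M)$ are not rigid when $\N,\M$ are non-semisimple, so I must work with the rigid dual categories $\C_\N^*$, $\A_\M^*$ and the genuine tensor category $\B_{\N\boxtimes\M}^*$ throughout, and only at the end relate dimensions of these to the non-rigid $\End$ factors appearing in the source and target of the sequence, being careful that the normalization of regular objects (Proposition \ref{regob}) is consistent across all three terms.
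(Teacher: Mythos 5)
Your parts (i) and (ii) are essentially the paper's argument, but your plan for (iii) and (iv) rests on Frobenius--Perron dimension counting, and this is where the proposal breaks down. First, a point of logic: Proposition \ref{dualseq} is proved under the hypotheses that $\iota$ is injective, $\A\subseteq\Ker(F)$, and $F$ is surjective --- exactness is \emph{not} assumed, so you cannot use $\alpha=1$, and indeed this proposition is an ingredient in the later proof that $\alpha$ is an algebraic integer and in the characterization of exactness. Second, for (iii), ``injective'' means \emph{fully faithful}, and no dimension count (nor the observation that $F^*$ is nonzero on simples) yields full faithfulness; a tensor functor can have trivial kernel without being an embedding. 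The paper's argument is structural: an object of $(\C\boxtimes\End(\M))^*_{\N\boxtimes\M}=\C_\N^*$ is a functor with a $\C\boxtimes\End(\M)$-linear structure $J_{Z,L}$, and $F^*$ just restricts to the constraints $J_{F(X),L}$; faithfulness is immediate, and fullness holds because surjectivity of $F$ (every $Z$ is a subobject of some $F(X)$) forces any natural transformation compatible with $J_{F(X),L}$ to be compatible with all $J_{Z,L}$. That extension step is the key idea your proposal is missing.

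For (iv), the obstacle you yourself flag --- that $\End(\N)$ is not rigid --- is fatal to your strategy: the target $\A_\M^*\boxtimes\End(\N)$ is not a tensor category, the image of a dominant monoidal functor into it is not a tensor subcategory, and the principle ``a tensor subcategory of full FP-dimension is everything'' has no meaning there. The paper instead proves surjectivity of $\iota^*$ by a composition argument: $\N\boxtimes\M$ is an exact module category over $\B\boxtimes\C_\N^*$, so \cite[Theorem 3.46]{EO} gives surjective tensor functors $\Xi$ and $\Psi$ out of $(\B\boxtimes\C_\N^*)^*_{\N\boxtimes\M}$ with $\iota^*\circ\Xi=(\Id\boxtimes\gamma)\circ\Psi$, where $\gamma:\C\to\End(\N)$ is the action functor, surjective by Proposition \ref{critex2}(ii); hence $\iota^*\circ\Xi$ is surjective and therefore so is $\iota^*$. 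You would need to replace your dimension count with an argument of this kind (or supply a genuinely new one that works in a non-rigid target) for the proof to go through.
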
 

\begin{remark} 1. This proposition is a generalization of 
\cite[Theorem 3.46]{EO}, which is recovered in the special cases 
$\C=\Vect$ and $\A=\Vect$. 

2. Note that $\N\boxtimes \M$ is not, in general, an exact $\A$-module category. By the dual category $\A^*_{\N\boxtimes \M}$ we mean the category of 
right exact $\A$-linear endofunctors of $\N\boxtimes \M$, which is easily seen to be the category $\A_\M^*\boxtimes \End(\N)$. 
\end{remark} 

\begin{proof}
(i) By Theorem \ref{improj}(i), $\mathcal{N}\boxtimes \mathcal{M}$ is an indecomposable exact module category over $\mathcal{B}$, so $\mathcal{B}_{\mathcal{N}\boxtimes \mathcal{M}}^*$ is 
a tensor category. 

(ii) Since $F\circ \iota$ is the action map of $\A$ on $\M$ (tensored with the identity functor on $\N$), we see that 
$\iota^*\circ F^*$ is the action map of $\C_\N^*$ on $\N$ (tensored with the identity functor on $\M$). In particular, $\C_\N^*\subseteq {\rm Ker}(\iota^*)$.  

(iii) Recall that $(\C\boxtimes \End(\M))^*_{\N\boxtimes \M}=\C_\N^*$ is the category of functors $G: \N\boxtimes \M\to \N\boxtimes\M$ 
with a $\C\boxtimes\End(\M)$-linear structure, i.e., equipped with isomorphisms $J_{ZL}: Z\otimes G(L)\to G(Z\otimes L)$ functorial in $Z\in \C\boxtimes \End(\M)$, $L\in \N\boxtimes \M$, and satisfying 
appropriate compatibility relations. Any such functor $(G,J)$ in particular defines a $\B$-linear endofuctor of $\N\boxtimes \M$ (which amounts 
to keeping only $J_{ZL}$ for $Z=F(X)$, $X\in \B$). Now, a morphism $\phi: (G_1,J_1)\to (G_2,J_2)$ is a family of morphisms $\phi_L: G_1(L)\to G_2(L)$, 
$L\in \N\boxtimes \M$, which is functorial in $L$ and respects $J_1,J_2$. Therefore, any morphism $\phi$ in $(\C\boxtimes \End(\M))^*_{\N\boxtimes \M}$
defines a morphism $F^*(\phi)$ in $\B_{\N\boxtimes \M}^*$, and the assignment $\phi\mapsto F^*(\phi)$ is injective. Moreover, since $F$ is surjective, any functorial collection of maps 
$\phi_L: G_1(L)\to G_2(L)$ which preserves $J_1,J_2$ for $Z=F(X)$ does so for any $Z\in \C\boxtimes \End(\M)$ (since any such $Z$ is a subobject in $F(X)$ for some $X$).  
This implies that $F^*$ is fully faithful, i.e., injective. 

(iv) Note that we have an action of $\C_\N^*$ on $\N\boxtimes \M$ (in the first component) which commutes 
with the action of $\B$. Thus, $\N\boxtimes \M$ is an exact module category over $\B\boxtimes \C_\N^*$. 
So by \cite[Theorem 3.46]{EO}, we have a surjective tensor functor $$\Xi: (\B\boxtimes \C_\N^*)^*_{\N\boxtimes \M}\to \B^*_{\N\boxtimes \M},$$ and also, 
since $\A\boxtimes \C_\N^*$ is contained in $\B\boxtimes \C_\N^*$, a surjective tensor functor 
$$\Psi: (\B\boxtimes \C_\N^*)^*_{\N\boxtimes \M}\to (\A\boxtimes \C_\N^*)^*_{\N\boxtimes \M}=\A_\M^*\boxtimes \C.$$ Moreover, we have 
$\iota^*\circ \Xi=({\rm Id}\boxtimes \gamma)\circ \Psi$, where $\gamma: \C\to \End(\N)$ is the action functor. 
The functor $\gamma$ is surjective by Proposition \ref{critex2}(ii), so the functor $({\rm Id}\boxtimes \gamma)\circ \Psi$ is surjective. 
Hence, the functor $\iota^*\circ \Xi$ is surjective. In other words, any object $Z\in \A_\M^*\boxtimes \End(\N)$ 
is a subquotient of $\iota^*(\Xi(X))$ for some $X$. This implies that $\iota^*$ is surjective. 
\end{proof} 

\begin{proposition} \label{bamexact} Under the above assumptions, 
$\B\boxtimes_\A \M$ is an exact $\B$-module category.  
\end{proposition}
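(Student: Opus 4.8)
The plan is to verify exactness straight from the definition, i.e. to show that $P\otimes\widetilde M$ is projective in $\B\boxtimes_\A\M$ for every projective $P\in\B$ and every $\widetilde M$. The key move is to replace the balanced tensor product by a convenient functor-category model. Using the canonical equivalence (\ref{tenprod}) with $\N=\B$ regarded as a right $\A$-module category via $\iota$, I would identify $\B\boxtimes_\A\M\cong \Fun_\A({}^\vee\M,\B)$, and check that this is an equivalence of \emph{left} $\B$-module categories: $\B$ acts on the source by left multiplication on the $\B$-factor, and on the target by post-composition with the regular action $X\otimes-$ on $\B$ (these commute with the right $\A$-action, so the structure is well defined). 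The reason for choosing this model rather than $\Fun_\A(\B^\vee,\M)$ is that the \emph{source} ${}^\vee\M$ is an exact $\A$-module category, as $\M$ is.

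Granting the identification, the argument is short. For $P\in\B$ projective and any $G\in\Fun_\A({}^\vee\M,\B)$ one has $(P\otimes G)(L)=P\otimes G(L)$, and $P\otimes(-)$ sends all of $\B$ into projectives because a finite tensor category is an exact module category over itself. Hence $P\otimes G$ is \emph{pointwise projective}. The crux is then the claim that a pointwise-projective functor $H\in\Fun_\A({}^\vee\M,\B)$ (i.e. $H(L)$ projective in $\B$ for all $L$) is itself a projective object of $\Fun_\A({}^\vee\M,\B)$. Combining the two statements shows that $P\otimes G$ is projective for every projective $P$ and every $G$, which is exactly the assertion that $\B\boxtimes_\A\M$ is an exact $\B$-module category. (One could instead invoke Lemma \ref{critex}, choosing for $X$ a single nonzero projective of $\B$, but the direct route already yields the full definition.)

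The heart of the matter, and the step I expect to be the main obstacle, is this characterization of projectives in $\Fun_\A({}^\vee\M,\B)$. It is precisely where exactness of $\M$ over $\A$ is indispensable: for a non-exact source the projective objects of an $\A$-linear functor category are the representable-type functors, not all pointwise-projective ones, and the claim would fail. To establish it I would describe the projectives explicitly, in the same spirit as the equivalence $G\colon\M\boxtimes\M^\vee\xrightarrow{\sim}\End(\M)$ of Subsection 2.5: a generating family of projectives has the form $L\mapsto \underline{\Hom}_{{}^\vee\M}(?,L_0)^*\otimes Q$ with $Q\in\B$ projective. Because ${}^\vee\M$ is exact, an internal Hom with a projective argument is again projective \cite{EO}, so these generators are pointwise projective; conversely every pointwise-projective functor is a direct summand of a finite sum of them, hence projective. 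Equivalently, exactness of ${}^\vee\M$ makes $\Hom_{\Fun_\A}(H,-)$ exact whenever $H$ is pointwise projective.

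As a backup to the crux step I would keep the comparison route in reserve: since $F$ is surjective, Theorem \ref{improj}(ii) gives that $\C\boxtimes\M$ is an exact $\B$-module category, and there is an exact $\B$-module functor $\Phi\colon\B\boxtimes_\A\M\to\C\boxtimes\M$, $b\boxtimes_\A m\mapsto F(b)\otimes(\bold 1\boxtimes m)$, restricting to $m\mapsto\bold 1\boxtimes m$ on $\M$. Here $P\otimes\Phi(\widetilde M)=\Phi(P\otimes\widetilde M)$ is projective, and one would try to transfer projectivity back across $\Phi$. This bypasses the functor-category bookkeeping but shifts the difficulty to showing that $\Phi$ reflects projectivity, which appears no easier; I therefore expect the functor-category argument to be the cleaner path, with its single genuine obstacle being the identification of the projective objects of $\Fun_\A({}^\vee\M,\B)$.
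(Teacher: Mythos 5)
There is a genuine gap, and it sits exactly where you predicted it would: the claim that a pointwise-projective object of $\Fun_\A({}^\vee\M,\B)$ is projective. Your justification for it --- ``conversely every pointwise-projective functor is a direct summand of a finite sum of them, hence projective'' --- is not an argument but a restatement of the thing to be proved: under the identification $\B\boxtimes_\A\M\cong{\rm mod}-A_\B$, the induced functors are the objects $Q\otimes A$ with $Q$ projective in $\B$, whose summands are by definition the projectives of $\B\boxtimes_\A\M$, while ``pointwise projective'' is what $P\otimes G$ automatically is; so the asserted converse is equivalent to the exactness statement itself. Note also that $\B$ is in general \emph{not} an exact $\A$-module category (already $\bold 1=P_\A(\bold 1)$ fails to be projective in $\B$ when $\A=\Vect$ and $\B$ is not semisimple), so the target of your functor category is outside the range where the structure theory of projectives in $\Fun_\A(\M_1,\M_2)$ from \cite{EO} applies. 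The decisive warning sign is that your main route never invokes the standing hypotheses of this subsection --- the existence of the surjective monoidal functor $F:\B\to\C\boxtimes\End(\M)$ with $\A\subseteq\Ker(F)$ --- whereas the paper immediately afterwards records the unconditional statement (``Is $\B\boxtimes_\A\M$ always an exact $\B$-module category?'') as an open question. An argument that proves the proposition without using $F$ would settle that question, so it cannot be as short as what you wrote.

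The paper's proof goes through duality precisely in order to bring $F$ into play: by Proposition \ref{dualseq}(i) and \cite[Theorem 3.31]{EO} it suffices to show that ${\rm Fun}_\B(\B\boxtimes_\A\M,\C\boxtimes\M)$ is exact over $\B^*_{\C\boxtimes\M}$; this functor category is identified with ${\rm Fun}_\A(\M,\C\boxtimes\M)=\C\boxtimes\A_\M^*$, on which $\B^*_{\C\boxtimes\M}$ acts through $\iota^*$, and exactness then follows from the surjectivity of $\iota^*$ (Proposition \ref{dualseq}(iv), which is where surjectivity of $F$ is used) together with Theorem \ref{improj}(ii). Your backup route via $\Phi:\B\boxtimes_\A\M\to\C\boxtimes\M$ at least uses the hypotheses, but under the assumptions of this subsection $\Phi$ need not be an equivalence (normality is not assumed), and you rightly note you cannot show it reflects projectivity; the dualization argument is the mechanism that replaces it.
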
 

\begin{proof} By Proposition \ref{dualseq}(i) and \cite[Theorem 3.31]{EO}, it suffices to show that the $\B_{\C\boxtimes\M}^*$-module category
${\rm Fun}_\B(\B\boxtimes_\A\M,\C\boxtimes\M)$ is exact. But 
$$
{\rm Fun}_\B(\B\boxtimes_\A\M,\C\boxtimes\M)={\rm Fun}_\A(\M,\C\boxtimes\M)=\C\boxtimes \A_\M^*. 
$$ 
The action of $\B_{\C\boxtimes\M}^*$ on $\C\boxtimes \A_\M^*$ is via the monoidal functor 
$\iota^*: \B_{\C\boxtimes \M}^*\to \A_\M^*\boxtimes \End(\C)$. 
By Proposition \ref{dualseq}(iv), the functor $\iota^*$ is surjective.
By Theorem \ref{improj}(ii), this implies that the module category $\C\boxtimes \A_\M^*$ over $\B^*_{\C\boxtimes \M}$ 
is exact.  
\end{proof} 

\begin{question} Let $\A\subseteq \B$ be any finite tensor categories, and $\M$ be an exact $\A$-module 
category. Is $\B\boxtimes_\A\M$ always an exact $\B$-module category?   
\end{question} 

\begin{proposition}\label{algint1} 
The number $\alpha$ in Theorem \ref{Fofreg} is an algebraic integer. 
\end{proposition}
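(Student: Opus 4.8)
The plan is to deduce the statement from the dimension formula of Theorem~\ref{Fofreg}(iv), applied not to the sequence (\ref{ES}) itself but to its dual (\ref{ES1}), chosen so that the module category involved has a unit object. Applied to (\ref{ES}) directly, Theorem~\ref{Fofreg}(iv) yields $\FPdim(\underline{\Hom}_\B(\bold 1\boxtimes M,\bold 1\boxtimes N))=\alpha\,\FPdim(M)\FPdim(N)$ for $M,N\in\M$; the left-hand side is the Frobenius--Perron dimension of an object of the finite tensor category $\B$, hence an algebraic integer, so $\alpha\,\FPdim(M)\FPdim(N)$ is an algebraic integer for all $M,N\in\M$. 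The obstruction is that the $\A$-module category $\M$ need not contain an object of Frobenius--Perron dimension $1$, so one cannot cancel the factors $\FPdim(M)\FPdim(N)$ to isolate $\alpha$. The idea is to replace $\M$ by a module category that does possess a unit object, namely a regular module category, which dualization makes available.

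Concretely, first I would form the dual sequence (\ref{ES1}) with respect to $\N\boxtimes\M$, taking $\N:=\C$ to be the regular $\C$-module category, so that $\N\boxtimes\M=\C\boxtimes\M$ and the sequence reads
$$\A_\M^*\boxtimes\End(\C)\xleftarrow{\iota^*}\B_{\C\boxtimes\M}^*\xleftarrow{F^*}\C_\C^*.$$
By Proposition~\ref{dualseq}, $F^*$ is injective, $\iota^*$ is surjective, $\C_\C^*\subseteq\Ker(\iota^*)$, and $\B_{\C\boxtimes\M}^*$ is a finite tensor category. Thus this is an instance of the setup of Subsection~\ref{behav} in which $\A,\B,\C,\M$ are replaced by $\C_\C^*,\B_{\C\boxtimes\M}^*,\A_\M^*,\C$ respectively (with $\C$ regarded as an indecomposable exact $\C_\C^*$-module category and $\iota^*$ as the surjective action functor), so that Theorem~\ref{Fofreg} applies to it.

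Second, I would apply Theorem~\ref{Fofreg}(iv) to this dual sequence, taking both objects of the module category $\C$ to be its unit object $\bold 1_\C$. Since the regular module category $\C$ inherits the Frobenius--Perron dimensions of $\C$ itself, its unit object satisfies $\FPdim(\bold 1_\C)=1$, and the formula collapses to
$$\FPdim\big(\underline{\Hom}_{\B_{\C\boxtimes\M}^*}(\bold 1\boxtimes\bold 1_\C,\bold 1\boxtimes\bold 1_\C)\big)=\alpha',$$
where $\alpha'=\FPdim(\B_{\C\boxtimes\M}^*)/(\FPdim(\C_\C^*)\FPdim(\A_\M^*))$ is the constant attached to the dual sequence. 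The left-hand side is the Frobenius--Perron dimension of an object of the finite tensor category $\B_{\C\boxtimes\M}^*$, hence an algebraic integer, so $\alpha'$ is an algebraic integer.

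Finally, I would identify $\alpha'$ with $\alpha$. Since $\C$, $\M$, and $\C\boxtimes\M$ are indecomposable exact module categories over $\C$, $\A$, and $\B$ respectively (the last being exact by Theorem~\ref{improj}(ii) and indecomposable as in Proposition~\ref{dualseq}), the dual categories $\C_\C^*$, $\A_\M^*$, and $\B_{\C\boxtimes\M}^*$ are Morita equivalent to $\C$, $\A$, and $\B$, and therefore have the same Frobenius--Perron dimensions. Hence $\alpha'=\FPdim(\B)/(\FPdim(\A)\FPdim(\C))=\alpha$, and $\alpha$ is an algebraic integer. The points requiring the most care are the verification that the dual data genuinely forms an instance of the setup of Subsection~\ref{behav} (so that Theorem~\ref{Fofreg}(iv) is applicable) and the Morita-invariance of $\FPdim$ used in this last step; the conceptual heart is simply the observation that passing to the regular module category supplies the unit object needed to cancel the dimension factors.
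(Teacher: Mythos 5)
Your proof is correct, but it takes a genuinely different route from the paper's. The paper picks a nonzero $M\in\M$, forms the internal endomorphism algebras $A=\underline{\End}_\A(M)$ in $\A$ and $A'=\underline{\End}_\B(\bold 1\boxtimes M)$ in $\B$, uses Theorem \ref{Fofreg}(iv) to get $\alpha=\FPdim(A')/\FPdim(A)$, and then invokes Proposition \ref{algint} (the ratio of the dimension of an $A$-bimodule to that of $A$ is an algebraic integer, being the Frobenius-Perron dimension of that bimodule as an object of a dual tensor category), which in turn needs Proposition \ref{bamexact} to guarantee that $\B\boxtimes_\A\M$ is exact. You instead take $\N=\C$ regular in (\ref{ES1}) and apply Theorem \ref{Fofreg}(iv) to the dual sequence at the unit object, identifying the resulting constant with $\alpha$ via Morita invariance of $\FPdim$. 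The two arguments are cousins --- both ultimately exhibit $\alpha$ as the Frobenius-Perron dimension of an object of a dual tensor category of $\B$, the paper's with respect to $\B\boxtimes_\A\M$ and yours with respect to $\C\boxtimes\M$ --- but yours bypasses the algebras $A,A'$ and Propositions \ref{algint} and \ref{bamexact} entirely. The cost is the verification you rightly flag: that (\ref{ES1}) is an instance of the setup of Subsection \ref{behav} so that Theorem \ref{Fofreg} applies to it. Proposition \ref{dualseq} supplies the required injectivity of $F^*$, surjectivity of $\iota^*$, and $\C_\C^*\subseteq\Ker(\iota^*)$, and the paper itself implicitly relies on this same compatibility when it applies Theorem \ref{eds} to the dual sequence in the proof of Theorem \ref{duales}, so this is a legitimate (if slightly under-examined in the paper) step. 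The one normalization point worth recording explicitly is that under the paper's convention $\FPdim(R_\N)=\FPdim(\C_\C^*)=\FPdim(\C)$ the unit object of the regular module category indeed has Frobenius-Perron dimension $1$, which is what allows the dimension factors to cancel.
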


\begin{proof} Pick a nonzero object $M\in \M$, and consider the algebra $A:=\underline{\End}_\A(M)$ in $\A$. 
Then $\M={\rm mod}-A_{\A}$, and $\B\boxtimes_\A \M={\rm mod}-A_{\B}$ (the categories of right $A$-modules in $\A$ and $\B$, respectively). 
On the other hand, consider the algebra $A':=\underline{\End}_\B(\bold 1\boxtimes M)$
in $\B$. Then $\C\boxtimes \M={\rm mod}-A'_{\B}$. 
We have a natural morphism $\Phi: A\to A'$, which is the image of the right action map 
$a: M\otimes A\to M$ under the isomorphism  
$$
\Hom_\M(M\otimes A,M)\cong \Hom_\B(A,A').
$$
Thus, $A'$ is an $A$-bimodule in $\B$, i.e., an object of the category 
$\B_{\B\boxtimes_\A\M}^{*\rm op}$. 

By Theorem \ref{Fofreg}(iv), we have $\FPdim(A)=\FPdim(M)^2$, while $\FPdim(A')=\alpha \FPdim(M)^2$, where 
$\alpha=\frac{\FPdim(\B)}{\FPdim(\A)\FPdim(\C)}$ (because of the normalization of the Frobenius-Perron 
dimensions in $\C\boxtimes \M$ as a $\B$-module). Hence, $\alpha=\frac{\FPdim(A')}{\FPdim(A)}$. 
But $A'$ is an $A$-bimodule in $\B$.Thus, the statement follows from 
Proposition \ref{algint} and 
Proposition \ref{bamexact}.
\footnote{This proof generalizes an argument in the semisimple case due to V. Ostrik. Note that this also gives another proof of Theorem \ref{Fofreg}(iii).} 
\end{proof} 

\section{Exact sequences} 

\subsection{The definition of an exact sequence of tensor categories with respect to a module category} 

Let $F:\B\to \C\boxtimes \End(\M)$ be a functor as in (\ref{actionfunctor}). 

\begin{definition}\label{gesd}  
Assume that $F$ is a surjective (= dominant) functor such that
$\mathcal{A}=\Ker(F)$.
We say that $F$ is {\em normal}, or {\em defines an exact sequence with respect to $\mathcal{M}$}
$$
\mathcal{A}\xrightarrow{\iota} \mathcal{B}\xrightarrow{F} \mathcal{C}\boxtimes \End(\M),
$$
if one of two equivalent conditions holds: 

(i) For any $X\in \mathcal{B}$ there exists a subobject $X_0\subseteq X$ such that $F(X_0)$ 
is the largest subobject of $F(X)$ contained in $\End(\M)\subseteq \C\boxtimes \End(\M)$.  

(ii) For any $X\in \mathcal{B}$ there exists a quotient object $X_0$ of $X$ such that $F(X_0)$ 
is the largest quotient object of $F(X)$ contained in $\End(\M)\subseteq \C\boxtimes \End(\M)$.

In this case we will also say that $\B$ is an {\it extension of $\C$ by $\A$ 
with respect to $\M$}. 
\end{definition}

\begin{remark} The equivalence of conditions (i) and (ii) follows by taking duals.  
\end{remark}

Note that if $\M={\rm Vec}$, this definition coincides with that of \cite{BN}. 
In particular, if $H\subseteq G$ are finite groups, $\B:=\Rep(G)$, $\C:=\Rep(H)$,
$F$ is the restriction functor, $\A:={\rm Ker}(F)$, and 
$\M:={\rm Vec}$, then $F$ is normal if and only if $H$ is a normal subgroup of $G$, which motivates the terminology. 
Also, it is clear that if $\B=\C\boxtimes \A$ and $F$ is the obvious functor, 
then $F$ defines an exact sequence with respect to $\M$. 
So $\C\boxtimes \A$ is an extension of $\C$ by $\A$ with respect to any 
indecomposable exact $\A$-module category $\M$ (e.g., $\M=\A$). 

\begin{lemma}\label{injprojsuff}
(i) Suppose condition (i) in Definition \ref{gesd} holds for injective (= projective) objects $X$. Then $F$ is normal. 

(ii)  Suppose condition (ii) in Definition \ref{gesd} holds for projective \linebreak (= injective) objects $X$. Then $F$ is normal.

(iii) In (i) and (ii), it suffices to restrict to indecomposable objects $X$. 
\end{lemma}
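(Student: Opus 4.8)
The plan is to check the normality condition first on injectives (for (i)) or projectives (for (ii)), propagate it to all objects using exactness of $F$, and then reduce to indecomposables by a direct-sum argument. Two ambient facts will be used repeatedly: $F$ is exact (Subsection~\ref{behav}), and both $\B$ and $\C\boxtimes\End(\M)$ are quasi-Frobenius, so in each ``injective'' and ``projective'' mean the same thing (matching the parentheses in the statement). The basic object of the proof is the assignment $W\mapsto\Sigma(W)$, the largest subobject of $W\in\C\boxtimes\End(\M)$ lying in $\End(\M)$. For this to be well defined I would first note that, under the embedding $Z\mapsto\be\boxtimes Z$, the subcategory $\End(\M)$ is closed under subobjects, quotients, and finite direct sums: writing $\C\boxtimes\End(\M)$ in terms of an algebra as in Subsection~2.7, these are exactly the objects on which the $\C$-factor acts through the counit, a condition inherited by sub- and quotient objects. (It is \emph{not} closed under extensions, so it is not a Serre subcategory; closure under sub- and quotient objects is all I will need.) Closure under quotients and direct sums makes the sum of two subobjects in $\End(\M)$ again lie in $\End(\M)$, so $\Sigma(W)$ exists; dually, $W$ has a largest quotient in $\End(\M)$.

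For part (i) I would record two properties of $\Sigma$. First, if $V\subseteq W$ then $\Sigma(V)=V\cap\Sigma(W)$: ``$\subseteq$'' because $\Sigma(V)$ is a subobject of $W$ in $\End(\M)$, hence contained in $\Sigma(W)$; ``$\supseteq$'' because $V\cap\Sigma(W)$ is a subobject of $\Sigma(W)\in\End(\M)$, hence itself in $\End(\M)$. Second, since $F$ is exact it preserves monomorphisms and intersections (an intersection of two subobjects is the kernel of a map out of their direct sum), so $F(V\cap V')=F(V)\cap F(V')$. Now take an arbitrary $X\in\B$ and embed it in its injective hull $X\hookrightarrow I$; the hypothesis gives $I_0\subseteq I$ with $F(I_0)=\Sigma(F(I))$. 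Setting $X_0:=X\cap I_0$ and using $F(X)\subseteq F(I)$,
$$
F(X_0)=F(X)\cap F(I_0)=F(X)\cap\Sigma(F(I))=\Sigma(F(X)),
$$
which is precisely condition (i) for $X$. Hence $F$ is normal.

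Part (ii) is the formally dual argument, which I would carry out directly rather than by invoking duality (which is delicate here, since $\End(\M)$ is not rigid). Given $X\in\B$, choose a projective cover $P\twoheadrightarrow X$ with kernel $K$; the hypothesis gives a quotient $P\twoheadrightarrow P_0$ with $F(P_0)$ the largest quotient of $F(P)$ in $\End(\M)$, say with kernel $L$. Put $X_0:=P/(K+L)$, a quotient of $X=P/K$. Applying $F$ and using exactness, $F(X_0)=F(P)/(F(K)+F(L))$ is a quotient of $F(P_0)=F(P)/F(L)$, hence lies in $\End(\M)$; and if $F(X)\twoheadrightarrow V$ with $V\in\End(\M)$, then $V$ is a quotient of $F(P)$ in $\End(\M)$, so $F(L)\subseteq\ker(F(P)\to V)$ by maximality of $F(P_0)$, while $F(K)\subseteq\ker(F(P)\to V)$ as $V$ is already a quotient of $F(X)$; thus $F(K)+F(L)\subseteq\ker(F(P)\to V)$ and $V$ is a quotient of $F(X_0)$. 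So $F(X_0)$ is the largest quotient of $F(X)$ in $\End(\M)$, and $F$ is normal.

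Finally, for part (iii) the key point is that $\Sigma$ commutes with finite direct sums; then the hypotheses of (i) and (ii) need only be checked on indecomposable injectives and projectives. The inclusion $\bigoplus_k\Sigma(W_k)\subseteq\Sigma(\bigoplus_k W_k)$ is immediate. Conversely, for $U:=\Sigma(\bigoplus_l W_l)$ each projection $\pi_k$ sends $U$ to a quotient $\pi_k(U)$, which lies in $\End(\M)$ and is a subobject of $W_k$, so $\pi_k(U)\subseteq\Sigma(W_k)$; since $U\hookrightarrow\bigoplus_k\pi_k(U)$, we get $U\subseteq\bigoplus_k\Sigma(W_k)$, proving equality. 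Consequently, if $I=\bigoplus_k I_k$ decomposes an injective into indecomposables and condition (i) holds for each $I_k$ via $(I_k)_0$, then $I_0:=\bigoplus_k(I_k)_0$ satisfies $F(I_0)=\bigoplus_k\Sigma(F(I_k))=\Sigma(F(I))$, so condition (i) holds for all injectives and part (i) applies; the projective case is dual. I expect the only genuine obstacle to be the bookkeeping around the non-Serre subcategory $\End(\M)$ — establishing that $\Sigma$ is well defined and is compatible with intersections, with monomorphisms under $F$, and with finite direct sums. Once these are in place, each of (i), (ii), (iii) is a short diagram chase.
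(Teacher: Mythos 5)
Your argument is correct and is essentially the paper's own proof: part (i) intersects $X$ with the distinguished subobject of its injective hull and uses exactness of $F$ plus closure of $\End(\M)$ under subobjects, and part (iii) is the same direct-sum/projection argument for the operator you call $\Sigma$ (the paper's $J$). The only cosmetic differences are that you spell out the closure properties of $\End(\M)\subseteq\C\boxtimes\End(\M)$ explicitly and prove (ii) directly via projective covers, where the paper simply says it follows from (i) by taking duals.
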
 

\begin{proof}  
(i) Let $X\in \B$, and $L$ be the sum of all subobjects of $F(X)$ contained in 
$\End(\M)$. Let $I(X)\supseteq X$ be the injective hull of $X$. Let $S$ 
be  the sum of all objects in $F(I(X))$ contained in $\End(\M)$. 
Then by condition (i) for injectives, $S=F(Z)$ for some $Z\in \A$, $Z\subseteq I(X)$, and $F(Z)\supseteq L$. 
Thus $F(Z)\cap F(X)=F(Z\cap X)\supseteq L$. But $Z\cap X\in \A$, since $Z\in \A$, so 
$F(Z\cap X)\in \End(\M)$, and hence $L=F(Z\cap X)$, as desired. 

(ii) is obtained from (i) by taking duals.

(iii) Let $J(Z)$ be the sum of all subobjects of $Z\in \C\boxtimes \End(\M)$ which are contained in $\End(\M)$. 
Then $J(\oplus_{i=1}^n Z_i)=\oplus_{i=1}^nJ(Z_i)$. Indeed, it is clear that $J(\oplus_{i=1}^n Z_i)\supseteq\oplus_{i=1}^nJ(Z_i)$. 
On the other hand, the projection of $J(\oplus_{i=1}^n Z_i)$ to each $Z_i$ is clearly contained in $J(Z_i)$, i.e.,
 $J(\oplus_{i=1}^n Z_i)\subseteq \oplus_{i=1}^nJ(Z_i)$, as claimed.  This implies that in (i), we can restrict to 
 indecomposable $X$. The proof that in (ii) we can restrict to indecomposable $X$ is similar. 
\end{proof} 

\subsection{Characterization of exact sequences in terms of Frobenius-Perron dimensions}

The following theorem gives an equivalent definition of an exact sequence. 
Let $F:\B\to \C\boxtimes \End(\M)$ be as in (\ref{actionfunctor}). 

\begin{theorem}\label{eds} Assume that $F$ is surjective, and let $\alpha=\frac{\FPdim(\B)}{\FPdim(\A)\FPdim(\C)}$. Then the following are equivalent: 

(i) $\alpha=1$, i.e., $\FPdim(\B)=\FPdim(\A)\FPdim(\C)$.

(ii) $\A={\rm Ker}(F)$ and $F$ is normal (i.e., $F$ defines an exact sequence of tensor categories). 
\end{theorem}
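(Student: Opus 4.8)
The plan is to prove the two implications separately, in each case reducing everything to a Frobenius--Perron dimension count on the regular object $R_\B$ via Theorem \ref{Fofreg}. First I set up a convenient reformulation of normality. By Lemma \ref{injprojsuff} it suffices to verify condition (i) of Definition \ref{gesd} on the indecomposable projective objects $P$ of $\B$. For $Z\in\C\boxtimes\End(\M)$ write $J(Z)$ for the largest subobject of $Z$ lying in $\End(\M)$ (constructed in Lemma \ref{injprojsuff}(iii), where it is shown to be additive in $Z$). Since $\End(\M)$ is closed under subquotients, the sum of all subobjects of $P$ that lie in $\Ker(F)$ is again in $\Ker(F)$; call it $P_0$. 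Then $F(P_0)\subseteq J(F(P))$ always, and normality holds at $P$ if and only if this inclusion is an equality: if some $X_0\subseteq P$ has $F(X_0)=J(F(P))$, then $X_0\in\Ker(F)$, so $X_0\subseteq P_0$ and hence $F(P_0)=J(F(P))$; the converse is immediate. Thus the theorem becomes a comparison, summed against the Frobenius--Perron dimensions of the simples, of the two effective classes $\sum_i\FPdim(X_i)J(F(P_i))$ and $\sum_i\FPdim(X_i)F((P_i)_0)=F(\widetilde R)$, where $\widetilde R:=\sum_i\FPdim(X_i)(P_i)_0$.

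Next I would handle the kernel. Assume (i), i.e. $\alpha=1$. By Corollary \ref{subcaexact} the category $\M$ is also exact and indecomposable over the intermediate tensor subcategory $\Ker(F)$, and $F$ restricted to $\Ker(F)$ is precisely the corresponding action functor into $\End(\M)$. Hence Theorem \ref{Fofreg}(iii), applied with $\A$ replaced by $\Ker(F)$, gives $\FPdim(\B)\ge\FPdim(\Ker(F))\FPdim(\C)$. Combined with $\FPdim(\B)=\FPdim(\A)\FPdim(\C)$ and the inclusion $\A\subseteq\Ker(F)$ (which forces $\FPdim(\A)\le\FPdim(\Ker(F))$), this yields $\FPdim(\Ker(F))=\FPdim(\A)$, and therefore $\Ker(F)=\A$.

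For the remaining equivalence between $\alpha=1$ and normality I would run the dimension count of the first paragraph. By additivity of $J$ the total trivial part of $F(R_\B)$ is $J(F(R_\B))=\sum_i\FPdim(X_i)J(F(P_i))$, which by Theorem \ref{Fofreg}(i) can be computed independently from $F(R_\B)=\alpha\,R_\C\boxtimes F(R_\A)$: extracting the $\bold 1$-isotypic part of $R_\C$ shows that $J(F(R_\B))$ is $\alpha$ times the corresponding trivial part of $R_\C\boxtimes F(R_\A)$. On the other hand $F(\widetilde R)$ records exactly the contribution coming from $\A=\Ker(F)$, and by Theorem \ref{Fofreg}(ii) it always contains the summand $Q\boxtimes F(R_\A)$. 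Since every term $J(F(P_i))-F((P_i)_0)$ is an effective class and $\FPdim(X_i)>0$, the difference $J(F(R_\B))-F(\widetilde R)$ is effective and vanishes if and only if it vanishes term by term, i.e. if and only if $F$ is normal. Comparing with the computation via Theorem \ref{Fofreg}(i)--(ii) shows that the total defect is governed by the factor $\alpha-1$, so it is zero exactly when $\alpha=1$. This gives $\alpha=1\Rightarrow$ normality (completing (i)$\Rightarrow$(ii) together with the kernel step) and normality $\Rightarrow\alpha=1$ (which, with $\A=\Ker(F)$, is (ii)$\Rightarrow$(i)).

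The main obstacle I anticipate is making the two computations of $J(F(R_\B))$ match exactly, i.e. pinning down the multiplicity with which the trivial (internal-$\End(\M)$) part occurs in $R_\C\boxtimes F(R_\A)$ and identifying it with the $\FPdim$-weighted count of the largest $\A$-subobjects $(P_i)_0$. This is where the quantitative input is needed: the module-category dimension identity of Theorem \ref{Fofreg}(iv) to evaluate the relevant internal-Hom dimensions, and the strict Perron--Frobenius monotonicity of Lemma \ref{lal} to convert the effective inequality $J(F(R_\B))\ge F(\widetilde R)$ into the per-$P_i$ equalities encoding normality. Controlling the trivial part of $R_\C$ (equivalently $\dim\Hom(\bold 1,R_\C)$) without appealing to delicate socle computations in $\C$ is the point requiring the most care; I expect to route around it by working with the internal-Hom description of $\C\boxtimes\M$ as a $\B$-module category used in Proposition \ref{algint1}, comparing the algebras $A=\iHom_\A(M,M)$ and $A'=\iHom_\B(\bold 1\boxtimes M,\bold 1\boxtimes M)$, for which $\alpha=\FPdim(A')/\FPdim(A)$ and normality translates into the natural map $A\to A'$ identifying $A$ with the largest $\A$-subobject of $A'$.
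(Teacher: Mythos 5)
Your overall strategy is the paper's: reduce normality to indecomposable projectives via Lemma \ref{injprojsuff}, identify $\Ker(F)$ with $\A$ by comparing $\alpha$ with $\alpha'=\frac{\FPdim(\B)}{\FPdim(\Ker(F))\FPdim(\C)}$ using Theorem \ref{Fofreg}(iii) (this part of your argument is exactly the paper's and is correct), and then compare the ``$\End(\M)$-part'' of $F(R_\B)$ computed two ways, concluding by term-by-term vanishing of a nonnegative sum. However, the crucial comparison is left open, and the one concrete claim you offer toward it is wrong. You need the identity $\sum_i\FPdim(X_i)\,F((P_i)_0)=F(R_\A)$ (equivalently $\sum_i\FPdim(X_i)(P_i)_0=R_\A$ in the Grothendieck group), and you propose to get it from Theorem \ref{Fofreg}(ii); but that theorem is a statement about $F(R_\B)$, not about $F(\widetilde R)$, and $F(\widetilde R)$ lies entirely in $\End(\M)$, so it cannot contain $Q\boxtimes F(R_\A)$ as a summand unless $Q=\bold 1$. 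This is precisely the point you flag as the ``main obstacle,'' and the tools you name for it (Theorem \ref{Fofreg}(iv), Lemma \ref{lal}, the algebras $A\subseteq A'$) are not the ones that close it: no Perron--Frobenius monotonicity is needed once you have an effective class summing to $(\alpha-1)\cdot(\text{positive})$, and the $A\to A'$ comparison belongs to the proof of Theorem \ref{ges}, not here.

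The source of the difficulty is your choice of the subobject formulation of normality. With largest subobjects, $(P_i)_0$ is the injective hull in $\A$ of ${\rm soc}(P_i)$ when ${\rm soc}(P_i)\in\A$ and is $0$ otherwise, so identifying $\widetilde R$ with $R_\A$ requires knowing that $i\mapsto{\rm soc}(P_i)$ permutes the simples with $\FPdim({\rm soc}(P_i))=\FPdim(X_i)$ (the distinguished-invertible-object bookkeeping), none of which you supply. The paper avoids this entirely by using the quotient formulation (Definition \ref{gesd}(ii) via Lemma \ref{injprojsuff}(ii)): every nonzero quotient of $P_i$ has head $X_i$, so the maximal quotient of $P_i$ lying in $\A$ is simply $P_i'$ (the projective cover of $X_i$ in $\A$) if $X_i\in\A$ and $0$ otherwise, whence the weighted sum is $R_\A$ on the nose. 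The maximal quotient of $F(P_i)$ lying in $\End(\M)$ is then detected by $\Hom(F(P_i),\bold 1\boxtimes Z)$, i.e., by the multiplicity of summands $Q\boxtimes R_l\boxtimes R_r^\vee$ in the projective object $F(P_i)$, and Theorem \ref{Fofreg}(i),(ii) give exactly the two evaluations whose difference is governed by $\alpha-1$. I recommend you rewrite the second half of your argument in this dual form; alternatively, if you insist on subobjects, you must prove the socle identity $\sum_i\FPdim(X_i)(P_i)_0=R_\A$ explicitly.
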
 

\begin{proof}
Let us show that (i) implies (ii). First of all, we can replace $\A$ with $\A':={\rm Ker}(F)\supseteq \A$, 
in which case $\alpha$ will be replaced by 
$$
\alpha':=\frac{\FPdim(\B)}{\FPdim(\A')\FPdim(\C)}\le \alpha.
$$ 
Since $\alpha=1$ and $\alpha'\ge 1$ by Theorem \ref{Fofreg}(iii), we see that $\alpha'=\alpha=1$, 
and hence $\FPdim(\A)=\FPdim(\A')$. Thus, $\A=\A'={\rm Ker}(F)$. 

Now, by Theorem \ref{Fofreg}(ii), $F(R_\B)=Q\boxtimes F(R_\A)\oplus T$ for some $T$. Since $\alpha=1$, 
$T$ has no direct summand of the form $Q\boxtimes R_l\boxtimes R_r^\vee$ with positive coefficients 
(where $R_j$, $R_r^\vee$ are the indecomposable projectives in $\M$, $\M^\vee$). In other words, for any $i$ such that 
$X_i\in \B$ but $X_i\notin \A$, and any $Z\in \End(\M)$, we have $\Hom(F(P_i),\bold 1\boxtimes Z)=0$
(where $P_i$ is the projective cover of $X_i$). This means that $F(P_i)$ has no nonzero quotients contained in $\End(\M)$. 

On the other hand, if $X_i\in \A$ and $P_i'$ is its projective cover in $\A$, it follows similarly that 
$$
\Hom(F(P_i),\bold 1\boxtimes Z)=\Hom(F(P_i'),Z).
$$
Now let $L_i$ be the maximal quotient of $F(P_i)$ in $\End(\M)$. Then $\Hom(F(P_i),\bold 1\boxtimes Z)=\Hom(L_i,Z)$. 
This means that the (a priori injective) map $\Hom(F(P_i'),Z)\to \Hom(L_i,Z)$  is in fact an isomorphism for any $Z$, so $L_i=F(P_i')$. 

By Lemma \ref{injprojsuff}(ii),(iii), this implies that $F$ is normal. 

The proof that (ii) implies (i) is obtained by running the above proof in reverse. 
First consider $X_i\in \B$ such that $X_i\notin \A={\rm Ker}(F)$. Then $P_i$ has no nonzero quotients 
contained in $\A$ (as any such quotient projects to $X_i$). Since $F$ is normal, this implies that 
$F(P_i)$ has no nonzero quotients contained in $\End(\M)$, hence $\Hom(F(P_i),\bold 1 \boxtimes Z)=0$ for all $Z\in \End(\M)$. 
So $F(P_i)$ has no summands $Q\boxtimes R_l\boxtimes R_r^\vee$ with positive coefficients. Similarly, if $X_i\in \A$, then 
the maximal quotient of $P_i$ in $\A={\rm Ker}(F)$ is the projective cover $P_i'$ of $X_i$ in $\A$, so 
by normality of $F$, the maximal quotient of $F(P_i)$ in $\End(\M)$ is $F(P_i')$. 
Hence $\Hom(F(P_i),\bold 1\boxtimes Z)=\Hom(F(P_i'),Z)$ for all $Z\in \End(\M)$. 
Thus, $F(P_i)=Q\boxtimes F(P_i')\oplus T_i$, where $T_i$ has no summands $Q\boxtimes R_l\boxtimes R_r^\vee$ with positive coefficients.
Hence, $F(R_\B)=Q\boxtimes F(R_\A)\oplus T$, where $T$ has no summands $Q\boxtimes R_l\boxtimes R_r^\vee$ with positive coefficients. 
This implies that $\alpha=1$. 
\end{proof} 

\begin{remark} 
In the case $\M=\Vect$ for fusion categories,
Theorem \ref{Fofreg} and Theorem \ref{eds} reduce to 
\cite[Proposition 3.10]{BN}.
\end{remark} 

\subsection{Characterization of exact sequences in terms of Tambara tensor products}

\begin{theorem}\label{ges}
Let
$F:\mathcal{B}\to \mathcal{C}\boxtimes \End(\M)$ be an exact monoidal functor as in (\ref{actionfunctor}). The following are equivalent:

(i) $F$ is surjective, $\mathcal{A}={\rm Ker}(F)$ and $F$ is normal, i.e., $$\mathcal{A}\xrightarrow{\iota} \mathcal{B}\xrightarrow{F} \mathcal{C}\boxtimes \End(\M)$$ is an exact sequence with respect to $\mathcal{M}$.

(ii) The natural functor $\Phi_*: \mathcal{B}\boxtimes_{\mathcal{A}}\mathcal{M}\to \mathcal{C}\boxtimes \mathcal{M}$, given by 
$$
\mathcal{B}\boxtimes_{\mathcal{A}}\mathcal{M}\xrightarrow{F\boxtimes_{\mathcal{A}} \id_{\mathcal{M}}} \mathcal{C}\boxtimes \End(\M)\boxtimes_{\mathcal{A}}\mathcal{M}=\mathcal{C}\boxtimes \mathcal{M}\boxtimes \mathcal{A}_{\mathcal{M}}^{*\rm op}\xrightarrow{\id_{\mathcal{C}}\boxtimes \rho} \mathcal{C}\boxtimes \mathcal{M},
$$ 
is an equivalence 
(where $\rho:\mathcal{M}\boxtimes \mathcal{A}_{\mathcal{M}}^{*\rm op} \to \mathcal{M}$ is the right action of $\mathcal{A}_{\mathcal{M}}^{*\rm op}$ on $\mathcal{M}$).

(iii) The natural functor $\Phi^*: {\rm Fun}_{\mathcal{A}}(\mathcal{M},\mathcal{B})\to {\rm Fun} (\mathcal{M},\mathcal{C})$, given by 

$$
{\rm Fun}_{\mathcal{A}}(\mathcal{M},\mathcal{B})\xrightarrow{F\circ ?} {\rm Fun}_{\mathcal{A}}(\mathcal{M},\mathcal{C}\boxtimes \End(\M))=
\mathcal{C}\boxtimes {}^\vee\mathcal{M}\boxtimes_{\mathcal{A}} \End(\M)=
$$
$$
\C\boxtimes \A_\M^*\boxtimes \M^\vee=\C\boxtimes \M^\vee\boxtimes \A_\M^*\xrightarrow{\id_{\mathcal{C}}\boxtimes \rho^{\rm op}} \mathcal{C}\boxtimes \mathcal{M}^\vee={\rm Fun} (\mathcal{M},\mathcal{C}),
$$
is an equivalence (where ${\rm Fun} (\mathcal{M},\mathcal{C})$ is the category of right exact functors from $\M$ to $\C$).
\end{theorem}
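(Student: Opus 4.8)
The plan is to prove the equivalence of the three characterizations by establishing the chain (i) $\Leftrightarrow$ (ii) and (i) $\Leftrightarrow$ (iii), where the third statement follows from the second by duality. I will treat (ii) as the central technical claim and deduce (iii) from it.

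First I would verify that $\Phi_*$ is a well-defined $\B$-module functor. The key computational ingredient is the identification $\End(\M)\boxtimes_\A \M = \M\boxtimes \A_\M^{*\rm op}$, which follows from the equivalence $\End(\M)\cong \M\boxtimes\M^\vee$ recorded in Subsection 2.5 together with $\M^\vee\boxtimes_\A\M = \A_\M^{*\rm op}$ from \eqref{tenprod}. With $\Phi_*$ in hand, the strategy is to show that, assuming (i), the functor $\Phi_*$ is an equivalence by checking it is exact (which will use Proposition \ref{bamexact}, giving that $\B\boxtimes_\A\M$ is an exact $\B$-module category), fully faithful, and surjective. The cleanest route is a dimension count: both $\B\boxtimes_\A\M$ and $\C\boxtimes\M$ are exact $\B$-module categories, so it suffices to show $\Phi_*$ is surjective (as $F$ is surjective) and that it induces an isomorphism on Frobenius-Perron dimensions of the regular objects. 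Here I would invoke Theorem \ref{eds}, which under hypothesis (i) gives $\alpha=1$, and Proposition \ref{regob}, which computes the regular object of $\C\boxtimes\M$ as $\alpha\cdot R_\C\boxtimes R_\M$. When $\alpha=1$, the regular objects match, forcing $\Phi_*$ to be an equivalence.

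For the converse direction, if $\Phi_*$ is an equivalence, then comparing regular objects forces $\alpha=1$, and Theorem \ref{eds} then yields that $\A=\Ker(F)$ and $F$ is normal, i.e., (i) holds. The equivalence of (i) and (iii) is then obtained by dualizing: passing to dual categories with respect to $\N\boxtimes\M$ as in Subsection 2.9, the functor $\Phi_*$ for the original sequence transforms into the functor $\Phi^*$ for the dual sequence, and Proposition \ref{dualseq} guarantees that the dual sequence satisfies the hypotheses of the theorem. Concretely, I would use the identifications ${\rm Fun}_\A(\M,\B)=\B\boxtimes_\A{}^\vee\M$ and the chain of equalities already spelled out in the statement of (iii), matching them against the dualized version of (ii).

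The main obstacle will be the careful bookkeeping of the module-category identifications and the compatibility of the various associativity and duality constraints, particularly verifying that the composite defining $\Phi_*$ is genuinely a $\B$-module functor and that the chain of equivalences $\C\boxtimes\End(\M)\boxtimes_\A\M=\C\boxtimes\M\boxtimes\A_\M^{*\rm op}$ is natural. The non-semisimplicity of $\End(\M)$, which is neither rigid nor has exact tensor product, means I cannot rely on naive dimension arguments alone and must be careful that exactness of $\Phi_*$ is established via Proposition \ref{bamexact} before attempting the Frobenius-Perron comparison. Once exactness and surjectivity are in place, the reduction to the numerical criterion $\alpha=1$ via Theorem \ref{eds} makes the equivalence with (i) essentially formal.
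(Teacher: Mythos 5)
Your reduction to the numerical criterion $\alpha=1$ via Theorem \ref{eds} is the right target, but the bridge you build to it has a genuine gap. The claim that $\Phi_*$, being surjective and ``matching regular objects,'' is forced to be an equivalence is not justified and is false as stated: with the standard normalization the regular object of any indecomposable exact $\B$-module category has Frobenius--Perron dimension $\FPdim(\B)$, so ``the regular objects match'' carries essentially no information. (Already for $\B=\Vect$, any surjective exact functor between finite abelian categories is a module functor preserving the normalized regular object, and it need not be an equivalence.) The paper closes exactly this gap by a different mechanism: it identifies $\Phi_*$ concretely as the induction functor along the injective algebra homomorphism $\Phi: A=\underline{\End}_\A(M)\hookrightarrow A'=\underline{\End}_\B(\bold 1\boxtimes M)$ (so $\B\boxtimes_\A\M={\rm mod}-A_\B$ and $\C\boxtimes\M={\rm mod}-A'_\B$), whence $\Phi_*$ is an equivalence if and only if $\Phi$ is an isomorphism; since $\FPdim(A')=\alpha\,\FPdim(A)$ by Theorem \ref{Fofreg}(iv), this happens if and only if $\alpha=1$. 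Without this (or some substitute establishing full faithfulness), your argument does not go through in either direction.

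Two further points. First, in the direction (ii) $\Rightarrow$ (i) you cannot simply ``invoke Theorem \ref{eds},'' because that theorem presupposes that $F$ is surjective, and surjectivity of $F$ is not among the hypotheses of (ii); the paper devotes a separate paragraph to deducing surjectivity of $F$ from the equivalence $\Phi_*$, using Proposition \ref{critex2}(i) and the projective cover of $\bold 1$ in $\A$. Your proposal omits this step. Second, your primary route to (iii) --- dualizing the whole sequence with respect to $\N\boxtimes\M$ --- is more roundabout than needed and is delicate, since the duality theorem for exact sequences assumes exactness (i.e., condition (i)), whereas (ii) $\Leftrightarrow$ (iii) must be proved without that assumption; the paper instead obtains (ii) $\Leftrightarrow$ (iii) directly from the adjunction \eqref{tenprod} between $\boxtimes_\A$ and ${\rm Fun}_\A$, which is the identification you mention only in passing at the end and which is the one you should make primary.
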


\begin{proof}
The equivalence of (ii) and (iii) follows from (\ref{tenprod}).

Let us prove that (ii) implies (i). First let us show that $F$ is surjective. We keep the above notation. 
By (ii), for any $s$ and $l$, the object $Q_s\boxtimes R_l$ is a subquotient (in fact, a direct summand) 
in $F(X)(\bold 1\boxtimes M)$ for some $X\in \B$ and $M\in \M$. 
By Proposition \ref{critex2}(i), by replacing $X$ by $X\otimes Z$, $Z\in \A$, we may assume that 
$M=M_r$ for any given $r$. Since for any $M\in \M$, $\Hom(M,M_r)^*\otimes M_r$ is canonically a quotient of $M$, this means 
that $F(X)$ has $Q_s\boxtimes R_l\boxtimes M_r^\vee$ as a quotient (where $M_r^\vee$ stands for $M_r$ regarded as an object of $\M^\vee$). 
Let $P_0'$ be the projective cover of $\bold 1$ in $\A$. Then $P_0'\otimes M_r$ contains $R_r$ as a direct summand (since $\M$ is an exact $\A$-module). 
Hence, we see that $F(X\otimes (P_0')^*)$ involves $Q_s\boxtimes R_l\boxtimes R_r^\vee$ as a quotient, hence as a direct summand. 
Since $s,l,r$ can be arbitrary, we conclude that $F$ is surjective. 

To prove the rest of the statements, consider the algebras $A,A'$ defined in the proof of Proposition \ref{algint1}, and recall that we have 
a natural homomorphism $\Phi: A\to A'$. It is easy to see that $\Phi$ is injective, i.e., $A\subseteq A'$. 
It is also easy to show that the natural functor $\Phi_*: B\boxtimes_\A \M\to \C\boxtimes \M$ is the induction functor for
$\Phi$. Thus, if $\Phi_*$ is an equivalence, then $\Phi$ is an isomorphism. 
Hence, $\alpha=1$. But then by Theorem \ref{eds}, 
$\A={\rm Ker}(F)$ and $F$ is normal.     

The proof that (i) implies (ii) is obtained by reversing the above argument. 
Namely, by Theorem \ref{eds}, we have $\alpha=1$. Hence, 
$\Phi$ is an isomorphism. This implies (ii). 
\end{proof}

Specializing to the case $\mathcal{M}=\Vect$, we get the following special case of Theorem \ref{ges}.

\begin{corollary}\label{bnes}
Let $\mathcal{A}\xrightarrow{\iota} \mathcal{B}\xrightarrow{F} \mathcal{C}$ be a sequence of tensor functors between finite tensor categories, such that 
$\iota$ is fully faithful, and the composition $F\circ \iota$ lands in $\Vect\subseteq \C$. Then the following conditions are equivalent:

(i) The sequence is exact in the sense of \cite{BN}.

(ii) The natural functor $\mathcal{B}\boxtimes_{\mathcal{A}}\Vect \to \mathcal{C}$, given by $$\mathcal{B}\boxtimes_{\mathcal{A}}\Vect \to \mathcal{C}\boxtimes_{\mathcal{A}}\Vect=\mathcal{C}\boxtimes \mathcal{A}_{\Vect}^{*\rm op}\to \mathcal{C},$$ is an equivalence. 

(iii) The natural functor ${\rm Fun}_{\mathcal{A}}(\Vect,\mathcal{B})\to \mathcal{C}$, given by the diagram
$${\rm Fun}_{\mathcal{A}}(\Vect,\mathcal{B})\to {\rm Fun}_{\mathcal{A}}(\Vect,\mathcal{C})\to {\rm Fun}(\Vect,\mathcal{C})=\mathcal{C},$$ is an equivalence. \qed
\end{corollary}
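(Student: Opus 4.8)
The plan is to deduce the corollary directly from Theorem \ref{ges} by specializing to $\M = \Vect$, so that essentially all the work lies in matching up the two setups. The first step is to record that $\End(\Vect)$ is canonically $\Vect$ (a right exact endofunctor of $\Vect$ is tensoring by a fixed vector space), whence $\C\boxtimes \End(\M) = \C\boxtimes \Vect = \C$ and the middle term of the general sequence becomes $\C$. Next I would observe that the hypothesis that $F\circ \iota$ lands in $\Vect \subseteq \C$ says precisely that $F\circ \iota: \A\to \Vect$ is a fiber functor; this makes $\Vect$ an indecomposable exact $\A$-module category (the standard module category of $\A\cong \Comod(H)$ for a finite-dimensional Hopf algebra $H$), and it exhibits $F$ as an action functor of the form (\ref{actionfunctor}), associated with the action of $\C^{\rm op}\boxtimes \B$ on $\C = \C\boxtimes \Vect$ whose restriction to $\C^{\rm op}\boxtimes \A$ is the module structure coming from this fiber functor. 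In this way the abstract hypotheses of Theorem \ref{ges} are placed in force.

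With this dictionary in hand, I would match the three conditions termwise. For part (i), condition (i) of Theorem \ref{ges} asserts that $F$ is surjective, that $\A = \Ker(F)$, and that $F$ is normal, i.e.\ that the displayed sequence is exact with respect to $\Vect$; by the remark following Definition \ref{gesd} this coincides with exactness in the sense of \cite{BN}, which is the corollary's condition (i). For parts (ii) and (iii) I would trace the functors $\Phi_*$ and $\Phi^*$ of Theorem \ref{ges} through the identification $\End(\Vect) = \Vect$ and the equivalences (\ref{tenprod}): with $\M = \Vect$ one has $\C\boxtimes \End(\M)\boxtimes_\A \M = \C\boxtimes_\A \Vect = \C\boxtimes \A_{\Vect}^{*\rm op}$ and ${\rm Fun}(\Vect,\C) = \C$, so that $\Phi_*$ and $\Phi^*$ reduce verbatim to the natural composites written in (ii) and (iii) of the corollary. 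The corollary is then immediate from Theorem \ref{ges}.

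The only genuine content, and hence the main (if modest) obstacle, is bookkeeping of two kinds. First, I must confirm that the bare sequence of tensor functors given in the corollary really is an instance of the action-functor setup behind (\ref{actionfunctor}); concretely, this means recognizing the requisite $\C^{\rm op}\boxtimes \B$-action on $\C$ and checking that its restriction along $\C^{\rm op}\boxtimes \A$ agrees with the module structure determined by the fiber functor $F\circ \iota$. Second, I must carry out the diagram chase that identifies the abstractly defined Tambara-product functors of Theorem \ref{ges}(ii),(iii) with the concrete composites displayed in the corollary, using (\ref{tenprod}) and $\End(\Vect)=\Vect$. Neither verification presents a real difficulty, which is why the corollary can be stated without a separate proof.
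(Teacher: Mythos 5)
Your proposal is correct and matches the paper exactly: the paper derives Corollary \ref{bnes} precisely by specializing Theorem \ref{ges} to $\M=\Vect$ (using $\End(\Vect)=\Vect$ and the fiber functor $F\circ\iota$), offering no further argument. The bookkeeping you describe is exactly what the paper leaves implicit.
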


\subsection{Semisimplicity of extensions of semisimple categories} 

Next we prove the following theorem, which extends \cite[Corollary 4.16]{BN}.

\begin{theorem}\label{extfc}
Let $\mathcal{A}$, $\mathcal{C}$ be fusion categories, and let $\mathcal{B}$ be a finite tensor category. Suppose that $\mathcal{M}$ is an indecomposable exact (i.e., semisimple) module category over $\mathcal{A}$ such that there is an exact sequence 
$$\mathcal{A}\xrightarrow{\iota} \mathcal{B}\xrightarrow{F} \mathcal{C}\boxtimes \End(\M)
$$
with respect to $\mathcal{M}$. Then $\mathcal{B}$ is a fusion category (i.e., it is semisimple).
\end{theorem}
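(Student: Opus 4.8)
The plan is to reduce semisimplicity of $\B$ to the single statement that the unit object $\bold 1_\B$ is projective, and then to extract this from the equivalence $\B\boxtimes_\A\M\simeq\C\boxtimes\M$ supplied by Theorem \ref{ges}. Recall that in a finite tensor category, tensoring a projective object by any (necessarily rigid) object is again projective; hence if $\bold 1_\B$ is projective then $V=\bold 1_\B\otimes V$ is projective for every $V\in\B$, so $\B$ is semisimple. By the quasi-Frobenius property, $\bold 1_\B$ is projective iff it is injective, so it suffices to prove that $\bold 1_\B$ is injective.

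To produce injectivity, I would fix a nonzero object $M\in\M$ and set $A:=\underline{\End}_\A(M)$, an algebra in $\A\subseteq\B$. As recorded in the proof of Proposition \ref{algint1}, $\B\boxtimes_\A\M={\rm mod}\text{-}A_\B$ (right $A$-modules in $\B$). Since we are given an exact sequence with respect to $\M$, Theorem \ref{ges}(ii) gives an equivalence $\Phi_*:\B\boxtimes_\A\M\xrightarrow{\sim}\C\boxtimes\M$. Now $\M$ is semisimple (it is exact over the fusion category $\A$) and $\C$ is fusion, so $\C\boxtimes\M$ is semisimple; therefore the abelian category ${\rm mod}\text{-}A_\B$ is semisimple.

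Next I would compare homological algebra in ${\rm mod}\text{-}A_\B$ and in $\B$ via the free/forgetful adjunction. The free-module functor ${\rm Free}=(-)\otimes A:\B\to{\rm mod}\text{-}A_\B$ is exact (tensoring by an object of a rigid category is exact) and sends every object to a projective (indeed free) module; it is left adjoint to the exact forgetful functor $U$. Consequently a projective resolution of $X$ in $\B$ maps to one of $X\otimes A$, giving $\Ext^n_\B(X,A)\cong\Ext^n_{{\rm mod}\text{-}A_\B}(X\otimes A,A)$ for all $X$ and all $n$. Since ${\rm mod}\text{-}A_\B$ is semisimple the right-hand side vanishes for $n=1$, so $\Ext^1_\B(X,A)=0$ for every $X\in\B$; that is, $A$ is injective in $\B$. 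Finally, because $\A$ is semisimple and $\bold 1$ is simple, the unit map $\bold 1\to A$ is a split monomorphism (equivalently, $\bold 1$ occurs in $A$ with multiplicity $\dim\Hom_\A(\bold 1,A)\ge 1$), so $\bold 1_\B$ is a direct summand of the injective object $A$ and is itself injective. This completes the argument.

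The step I expect to require the most care is the identification $\B\boxtimes_\A\M={\rm mod}\text{-}A_\B\simeq\C\boxtimes\M$: it is precisely here that the exactness hypothesis enters, through Theorem \ref{ges}, and the analogue of this identification is exactly what fails in the non-exact situation. It is worth emphasizing where each semisimplicity hypothesis is used. Semisimplicity of $\C$ and of $\M$ guarantees that $\C\boxtimes\M$, and hence ${\rm mod}\text{-}A_\B$, is semisimple, while semisimplicity of $\A$ is used to split the unit $\bold 1\to A$ and thereby transfer injectivity from $A$ to $\bold 1$. Dropping the latter (so that $\A$ is merely a finite tensor category admitting a semisimple exact module category) would break this final splitting, consistent with the fact that $\B$ need not then be semisimple.
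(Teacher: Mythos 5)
Your proof is correct, and it takes a genuinely different route from the paper's. Both arguments begin identically: Theorem \ref{ges} identifies $\B\boxtimes_\A\M$ with the semisimple category $\C\boxtimes\M$. From there the paper stays at the level of module categories: it observes that $\B\boxtimes_\A\M$ is then an exact module category over $\A_\M^{*\rm op}$, applies \cite[Theorem 3.31]{EO} to conclude that ${\rm Fun}_{\A_\M^{*\rm op}}(\B\boxtimes_\A\M,\M)$ is an exact right $\A$-module category, identifies this functor category with $\B^\vee$ via the balanced tensor product calculus, and finishes by noting that an exact module category over a fusion category is semisimple. You instead descend to the internal algebra $A=\underline{\End}_\A(M)$, use the free--forgetful adjunction for ${\rm mod}\text{-}A_\B=\B\boxtimes_\A\M$ to transfer injectivity of $A$ from the semisimple module category down to $\B$, split off $\bold 1$ from $A$ using semisimplicity of $\A$, and invoke the quasi-Frobenius property plus rigidity. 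Your version is more elementary (it avoids \cite[Theorem 3.31]{EO} and the chain of $\boxtimes_\A$ identities) and makes visible exactly where each semisimplicity hypothesis enters --- your closing remark that the splitting of $\bold 1\to A$ is what fails when $\A$ is not semisimple is accurate (e.g.\ $\M=\Vect$ over a non-semisimple $\Rep(H)$); the paper's version is shorter given the machinery already in place and yields the stronger structural statement that $\B^\vee$ is an exact $\A$-module category. One small quibble: your parenthetical claim that ${\rm Free}=(-)\otimes A$ sends \emph{every} object to a projective module is not true for algebras in a non-semisimple ambient category --- $X\otimes A$ is projective in ${\rm mod}\text{-}A_\B$ when $X$ is projective in $\B$, which is all your resolution argument actually needs (and which holds because ${\rm Free}$ is left adjoint to the exact forgetful functor). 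Equivalently and slightly more directly: the forgetful functor has an exact left adjoint, hence preserves injectives, and $A$ is injective in the semisimple category ${\rm mod}\text{-}A_\B$, so it is injective in $\B$.
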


\begin{proof}
The category $\B\boxtimes_\A\M$ is a right module category over the finite tensor category $\A_\M^{*\rm op}$.
This module category is semisimple, since by Theorem \ref{ges}, it is equivalent as a $\B$-module category 
(in particular, just as an abelian category) to $\C\boxtimes \M$, and both $\C$ and $\M$ are semisimple. 
Therefore, $\B\boxtimes_\A\M$ is an exact (possibly decomposable) right module category over $A_\M^{*\rm op}$. 
Hence, by \cite[Theorem 3.31]{EO}, the category ${\rm Fun}_{A_\M^{*\rm op}}(\B\boxtimes_\A \M,\M)$ is an exact right $\A$-module category. 
But by  \eqref{tenprod}, we have 
\begin{eqnarray*}
\lefteqn{{\rm Fun}_{A_\M^{*\rm op}}(\B\boxtimes_\A \M,\M)=\M\boxtimes_{\A_\M^{*\rm op}}(\B\boxtimes_\A \M)^\vee}\\
& = &
\M\boxtimes_{\A_\M^{*\rm op}}(\M^\vee\boxtimes_\A\B^\vee)= 
(\M\boxtimes_{\A_\M^{*\rm op}}\M^\vee)\boxtimes_\A\B^\vee\\
& = & 
\A^\vee\boxtimes_\A \B^\vee=(\B\boxtimes_\A \A)^\vee=\B^\vee.
\end{eqnarray*}
Since $\A$ is a fusion category, this implies that $\B$ is also a fusion category, as desired. 
\end{proof} 

\section{Dualization}

\subsection{Duality of exact sequences} 
In this section, we show that our notion of exact sequences is self-dual. 
Suppose we have a sequence of the form \eqref{ES} which is an exact sequence of tensor categories with respect to $\M$.  
Let $\N$ be an indecomposable exact $\C$-module category. 

\begin{theorem}\label{duales}
The dual sequence (\ref{ES1}) of (\ref{ES}) with respect to $\mathcal{N}\boxtimes \mathcal{M}$
is an exact sequence with respect to the indecomposable exact module category $\mathcal{N}$ over $\mathcal{C}_{\mathcal{N}}^*$.
Moreover, taking the double dual is the identity operation, i.e., the dual of the sequence (\ref{ES1}) with respect to $\M\boxtimes \N$ is the original sequence (\ref{ES}).
\end{theorem}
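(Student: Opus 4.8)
The plan is to prove the two assertions separately: exactness of (\ref{ES1}) via the Frobenius--Perron dimension criterion of Theorem \ref{eds}, and the double-dual claim via the standard self-duality of the dual tensor category construction.

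For the first assertion, I would start from what Proposition \ref{dualseq} already gives: $\B_{\N\boxtimes\M}^*$ is a tensor category, $F^*$ is injective, $\iota^*$ is surjective, and $\C_\N^*\subseteq\Ker(\iota^*)$. By construction the functor $\iota^*\colon\B_{\N\boxtimes\M}^*\to\A_\M^*\boxtimes\End(\N)$ is an action functor of the form (\ref{actionfunctor}), with ambient subcategory $\C_\N^*$, quotient $\A_\M^*$, and exact module category $\N$ over $\C_\N^*$; indeed, by Proposition \ref{dualseq}(ii) the composite $\iota^*\circ F^*$ is the action map of $\C_\N^*$ on $\N$. Hence Theorem \ref{eds} applies to $\iota^*$ (its surjectivity being Proposition \ref{dualseq}(iv)), and exactness of (\ref{ES1}) with respect to $\N$ is \emph{equivalent} to the single numerical identity
$$
\alpha^*:=\frac{\FPdim(\B_{\N\boxtimes\M}^*)}{\FPdim(\A_\M^*)\FPdim(\C_\N^*)}=1 .
$$

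To evaluate $\alpha^*$ I would invoke the invariance of the Frobenius--Perron dimension under duality with respect to an exact module category, namely $\FPdim(\D_{\mathcal K}^*)=\FPdim(\D)$ for a finite tensor category $\D$ and an exact $\D$-module category $\mathcal K$ (see \cite{ENO,EGNO}). Applying this to the three dual categories gives $\FPdim(\B_{\N\boxtimes\M}^*)=\FPdim(\B)$, $\FPdim(\A_\M^*)=\FPdim(\A)$, and $\FPdim(\C_\N^*)=\FPdim(\C)$, so that
$$
\alpha^*=\frac{\FPdim(\B)}{\FPdim(\A)\FPdim(\C)}=\alpha=1,
$$
the last equality holding because (\ref{ES}) is exact (Theorem \ref{eds} in the other direction). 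By Theorem \ref{eds} this yields $\C_\N^*=\Ker(\iota^*)$ together with normality of $\iota^*$, i.e.\ (\ref{ES1}) is an exact sequence with respect to $\N$.

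For the second assertion I would identify the double dual termwise. Dualizing (\ref{ES1}) requires an exact module category over its quotient $\A_\M^*$, for which one takes $\M$ (viewed over $\A_\M^*$), while its own module category is $\N$ over the kernel $\C_\N^*$; thus the module category governing the second dualization is $\M\boxtimes\N$. Using the canonical double-duality equivalence $(\D_{\mathcal K}^*)_{\mathcal K}^*\cong\D$, natural in $\D$ (see \cite{ENO,EGNO}), the new kernel is $(\A_\M^*)_\M^*\cong\A$, the new quotient is $(\C_\N^*)_\N^*\cong\C$, the new module category over $\A$ is again $\M$, and the new middle term is $(\B_{\N\boxtimes\M}^*)_{\M\boxtimes\N}^*\cong\B$, where for the last identification I rewrite the module category via the symmetry $\M\boxtimes\N\cong\N\boxtimes\M$ of the Deligne product and then apply double-duality. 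Thus all three categories and the module category of the double dual coincide with those of (\ref{ES}).

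The step I expect to require the most care is verifying that, under these identifications, the twice-dualized functors coincide with $F$ and $\iota$. This is a bookkeeping matter rather than a new idea: one traces the definition of each dual functor (restriction of the module-category linear structure, exactly as in the proof of Proposition \ref{dualseq}(iii)) through both dualizations and uses naturality of the double-duality equivalence, all the while keeping track of the swap $\M\boxtimes\N\cong\N\boxtimes\M$ and the compatibility of the module-category structures. The only genuinely delicate point is ensuring that these several canonical identifications are mutually compatible, so that the reconstructed sequence is literally (\ref{ES}) and not merely one equivalent to it.
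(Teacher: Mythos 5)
Your proposal is correct and follows essentially the same route as the paper: it combines Proposition \ref{dualseq} with the invariance of Frobenius--Perron dimension under dualization and then applies Theorem \ref{eds} in both directions to get exactness of (\ref{ES1}). The paper simply declares the involutivity of duality ``obvious,'' so your more careful bookkeeping of the double-dual identifications is extra detail rather than a different argument.
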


\begin{proof} By Proposition \ref{dualseq}, $\B_{\N\boxtimes\M}$ is a tensor category, 
$\C_\N^*\subseteq \Ker(\iota^*)$, and $F^*,\iota^*$ are exact monoidal functors such that $F^*$ is 
injective and $\iota^*$ is surjective. Now, by \cite[Corollary 3.43]{EO}, we have 
$$
\FPdim(\A_\M^*)=\FPdim(\A),\ \FPdim(\C_\N^*)=\FPdim(\C),
$$
and
$$
 \FPdim(\B_{\M\boxtimes \N}^*)=\FPdim(\B).
$$
Hence, by Theorem \ref{eds}, we have 
$$
\FPdim(\B_{\M\boxtimes \N}^*)=\FPdim(\C_\N^*)\FPdim(\A_\M^*).
$$ 
Therefore, using Theorem \ref{eds} again, we conclude that $\C_\N^*={\rm Ker}(\iota^*)$ and $\iota^*$ is normal. 
Thus, (\ref{ES1}) is an exact sequence. 

The last statement (involutivity of duality for exact sequences) is obvious. 
\end{proof}

\subsection{Classes of extensions} 
Let $\A$ and $\C$ be finite tensor categories, and let $\M$ be an 
indecomposable exact $\A$-module category. 
Denote by $\Ext(\C,\A,\M)$ the set of equivalence classes 
of exact sequences (\ref{ES}) with various $\B\supseteq \A$ 
and various functors $F$. Let $\Ext(\C,\A)$ be the 
disjoint union of $\Ext(\C,\A,\M)$ over all equivalence classes of 
$\M$. Given an equivalence class of exact sequences $\mathcal{S}\in \Ext(\C,\A,\M)$, and an indecomposable 
exact $\C$-module category $\N$, denote the equivalence class of the dual exact sequence defined in the previous 
subsection by $\mathcal{S}_\N^*$; we have $\mathcal{S}_\N^*\in \Ext(\A_\M^*,\C_\N^*,\N)$. 
Now, let $\M'$ be an indecomposable exact $\A_\M^*$-module category. 
Then we have $({\mathcal S}_\N^*)_{\M'}^*\in \Ext(\C,\A',\M')$, where 
$\A':=(\A_\M^*)_{\M'}^*$. Thus, we obtain the following result. 

\begin{proposition}\label{extols} 
(i) For any weak Morita equivalence from $\A$ to $\A'$ that maps $\M$ to $\M'$
(see \cite[Section 3.3]{EO}), we have a natural bijection of sets $\Ext(\C,\A,\M)\cong \Ext(\C,\A',\M')$.
In particular, we have a natural bijection of sets $\Ext(\C,\A)\cong \Ext(\C,\A')$ for any 
weak Morita equivalence between $\A$ and $\A'$ (i.e., $\Ext(\C,\A)$ is a Morita invariant of $\A$).  

(ii) 
$\Ext(\C,\A,\M)=\Ext(\C,\A_\M^{*\rm op},\A_\M^{*\rm op})$. \qed
\end{proposition}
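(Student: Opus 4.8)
The plan is to read both statements off the dualization construction that immediately precedes the proposition, whose only substantial input is the involutivity of duality recorded in Theorem \ref{duales}. First I would pin down what the data of a weak Morita equivalence from $\A$ to $\A'$ ``mapping $\M$ to $\M'$'' should mean in this context. Recall that weak Morita equivalence is generated by passing to duals with respect to indecomposable exact module categories and is an equivalence relation (see \cite[Section 3.3]{EO}); concretely I would take such an equivalence to be the data of an indecomposable exact $\A_\M^*$-module category $\M'$ together with the identification $\A'=(\A_\M^*)_{\M'}^*$, under which $\M$ corresponds to $\M'$ via the induced equivalence of the $2$-categories of module categories. Every weak Morita equivalence out of $\A$ arises this way (up to $\op$), which is exactly the hypothesis built into the paragraph defining $(\mathcal{S}_\N^*)_{\M'}^*$.

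Next I would define the comparison map. Fixing an indecomposable exact $\C$-module category $\N$ (for definiteness the regular one, $\N=\C$), for $\mathcal{S}\in \Ext(\C,\A,\M)$ I set
$$
\Theta(\mathcal{S}):=(\mathcal{S}_\N^*)_{\M'}^*,
$$
which, as explained just before the proposition, lies in $\Ext(\C,(\A_\M^*)_{\M'}^*,\M')=\Ext(\C,\A',\M')$: the quotient side returns to $\C$ because $(\C_\N^*)_\N^*=\C$, while the kernel side travels $\A\rightsquigarrow \A_\M^*\rightsquigarrow (\A_\M^*)_{\M'}^*=\A'$. The candidate inverse is $\mathcal{T}\mapsto (\mathcal{T}_\N^*)_\M^*$, which lands in $\Ext(\C,(\A_\M^*)_\M^*,\M)=\Ext(\C,\A,\M)$. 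That these are mutually inverse is two applications of Theorem \ref{duales}: the sequence $(\Theta\mathcal{S})_\N^*$ is the dual of $(\mathcal{S}_\N^*)_{\M'}^*$ with respect to $\N\boxtimes \M'$, hence equals $\mathcal{S}_\N^*$ by involutivity, and dualizing $\mathcal{S}_\N^*$ once more with respect to $\M\boxtimes \N$ returns $\mathcal{S}$; the reverse composite is identical after exchanging the roles of $\M$ and $\M'$. This establishes the first assertion of (i), and the ``in particular'' clause then follows by taking the disjoint union over all equivalence classes of $\M$, using that a weak Morita equivalence induces a bijection on equivalence classes of indecomposable exact module categories.

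Finally I would deduce (ii) as the special case of this construction with $\M'$ the regular module category of $\A_\M^*$. Then $(\A_\M^*)_{\M'}^*=(\A_\M^*)^{\op}=\A_\M^{*\op}$, and $\M'$ becomes the regular $\A_\M^{*\op}$-module, so $\Theta$ specializes to the claimed identification $\Ext(\C,\A,\M)=\Ext(\C,\A_\M^{*\op},\A_\M^{*\op})$. Conceptually this is the weak Morita equivalence $\A\simeq \A_\M^{*\op}$ furnished by $\M$ itself: under $\mathcal{L}\mapsto \Fun_\A(\M,\mathcal{L})$ the module category $\M$ is sent to $\Fun_\A(\M,\M)=\A_\M^*$, i.e. to the regular module category.

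I expect the main obstacle to be bookkeeping rather than genuine mathematics: pinning down the precise meaning of ``maps $\M$ to $\M'$'' so that it matches the iterated-dual data $\A'=(\A_\M^*)_{\M'}^*$, fixing the left/right-dual and $\op$ conventions consistently so that the identities $(\C_\N^*)_\N^*=\C$ and $(\A_\M^*)_\M^*=\A$ hold on the nose, and checking that $\Theta$ is genuinely independent of the auxiliary $\C$-module category $\N$ (equivalently, that different choices of $\N$ yield equivalent exact sequences, so that the bijection is natural and not merely a bijection for each fixed $\N$). Once these conventions are settled, both statements reduce to the involutivity already supplied by Theorem \ref{duales}.
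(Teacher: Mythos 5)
Your proposal is correct and follows essentially the same route as the paper, which offers no separate proof but derives the proposition directly from the preceding paragraph's double-dualization construction $\mathcal{S}\mapsto(\mathcal{S}_\N^*)_{\M'}^*$ together with the involutivity statement in Theorem \ref{duales}. Your additional remarks (independence of the auxiliary $\N$, the $\op$ bookkeeping, and obtaining (ii) by taking $\M'$ to be the regular $\A_\M^*$-module category) are exactly the details the paper leaves implicit.
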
 

\subsection{Examples of extensions} 
Let us show how one can obtain new exact sequences by dualizing some obvious ones.

\begin{example} For a finite group $G$ and a $3-$cocycle $\omega\in Z^3(G,k^\times)$, let $\Vect(G,\omega)$ be the 
fusion category of finite dimensional $G$-graded vector spaces with associativity defined by $\omega$. Let $\Vect(G):=\Vect(G,1)$. 
If $H\subseteq G$ is a subgroup, and $\psi: H\times H\to k^\times$ is a cochain such that $d\psi=\omega|_{H}$, let
$\mathcal{M}(H,\psi)$ be the corresponding indecomposable module category over $\Vect(G,\omega)$ (see \cite[8.8]{ENO}).
Namely, $\M(H,\psi)$ is the category of right $H$-equivariant finite dimensional $G$-graded vector spaces with associativity defined by $\psi$.
Let $\C(G,H,\omega,\psi)$ be the corresponding opposite dual category, i.e., the category of $H$-biequivariant finite dimensional $G$-graded vector spaces with associativity defined by $\psi$.
E.g., $\C(G,1,\omega,1)=\Vect(G,\omega)$ and $\C(G,G,1,1)=\Rep(G)$. 

Let $$1\to G_1\to G\xrightarrow{f} G_2\to 1$$ be a short exact sequence of finite groups, and let $\omega_2\in Z^3(G_2,k^\times)$. Clearly,
$$
\Vect(G_1)\to\Vect(G,\omega_2\circ f)\xrightarrow{F}\Vect(G_2,\omega_2)
$$   
is an exact sequence with respect to $\mathcal{M}:=\Vect$, where $F$ is the tensor functor corresponding to the homomorphism $f$
(this is an exact sequence in the sense of \cite{BN}).

Let $\N=\M(H_2,\psi_2)$ for some subgroup $H_2\subseteq G_2$ and cochain $\psi_2$ on $H_2$ such that $d\psi_2=\omega_2|_{H_2}$. 
By Theorem \ref{duales}, dualizing the above exact sequence with respect to $\mathcal{N}$ yields an exact sequence
$$
\Rep(G_1)\boxtimes \End(\mathcal{N})\leftarrow \mathcal{C}(G,f^{-1}(H_2),\omega_2\circ f,\psi_2\circ f)^{\rm op}\leftarrow \mathcal{C}(G_2,H_2,\omega_2,\psi_2)^{\rm op}
$$
with respect to $\mathcal{M}(H_2,\psi_2)$.

For $\omega_2=1$ and $\mathcal{N}=\Vect$ (i.e, $H_2=G_2, \psi_2=1$), the resulting exact sequence with respect to $\Vect$ is $$
\Rep(G_1)\leftarrow \Rep(G)\leftarrow \Rep(G_2).
$$

For $\mathcal{N}=\Vect(G_2)$ (i.e, $H_2=1, \psi_2=1$), the resulting exact sequence with respect to $\Vect(G_2)$ is $$
\Rep(G_1)\boxtimes \End (\Vect(G_2))\leftarrow \mathcal{C}(G,G_1,\omega_2\circ f,1)\leftarrow \Vect(G_2,\omega_2).
$$
\end{example}

\begin{example}
Let $\mathcal{C}$ be a finite tensor category acted upon by a finite group $G$. Let $\mathcal{C}^G$ be the equivariantization category (see, e.g., \cite[4.15]{EGNO}), and consider the diagram
$$
\Rep(G)\rightarrow \mathcal{C}^G\xrightarrow{F} \mathcal{C}
,$$ where $F$ is the forgetful functor. By \cite{BN}, it is an exact sequence with respect to $\mathcal{M}:=\Vect$. Since $(\mathcal{C}^G)^*_{\mathcal{C}}=(\C\rtimes G)^{\rm op}$, dualizing it with respect to $\mathcal{N}:=\mathcal{C}$ yields the exact sequence
$$
\Vect(G)\boxtimes \End(\mathcal{C})\leftarrow (\C\rtimes G)^{\rm op}\leftarrow \mathcal{C}^{\rm op}$$ with respect to $\mathcal{C}$. 
\end{example}


\begin{thebibliography}{ABCD}

\bibitem
[BN1]{BN} A. Brugui\`eres and S. Natale. Exact sequences of tensor categories. {\em Int. Math. Res. Not.} {\bf no. 24} (2011), 5644--5705.

\bibitem
[BN2]{BN1} A. Brugui\`eres and S. Natale. Central Exact Sequences of Tensor Categories, Equivariantization and Applications. {\em arXiv:1112.3135}. 

\bibitem[DSS1]{DSS} C. Douglas, C. Schommer-Pries and N. Snyder. 
Dualizable tensor categories. {\em arXiv:1312.7188}.

\bibitem[DSS2]{DSS2} C. Douglas, C. Schommer-Pries and N. Snyder. 
The balanced tensor product of module categories. {\em arXiv:1406.4204}.

\bibitem
[EGNO]{EGNO} P. Etingof, S. Gelaki, D. Nikshych and V. Ostrik. Tensor categories. {\em AMS Mathematical Surveys and Monographs book series}, to appear.
    
\bibitem
[ENO1]{ENO} P. Etingof, D. Nikshych and V. Ostrik. On fusion categories. \textit{Annals of Mathematics} \textbf{162} (2005), 581--642. 

\bibitem
[ENO2]{ENO2} P. Etingof, D. Nikshych and V. Ostrik.
Weakly Group-theretical and solvable fusion categories. 
{\em Advances in Mathematics} {\bf 226} (2011), no. 1, 176--205.

\bibitem
[EO]{EO} P. Etingof and V. Ostrik. Finite tensor categories. \textit{Moscow Math.\ Journal} \textbf{4} (2004), 627--654.

\bibitem[L]{L}
J. Lurie. On the Classification of Topological Field Theories. {\em Current Developments in Mathematics} {\bf 2008} (2009), 129--280. (arXiv:0905.0465.)

\bibitem
[M]{Mo} S. Montgomery. Hopf algebras and their actions on rings. \textit{CBMS Regional Conference Series in Mathematics} \textbf{82} (1993).


\bibitem
[R]{R} D. Radford. Hopf algebras. \textit{World Scientific} (2011).

\bibitem
[S]{Sch} H-J. Schneider. Some remarks on exact sequences of quantum groups. {\em Comm. Alg.} {\bf 21(9)} (1993), 3337--3357.

\bibitem[T]{Ta} D. Tambara.
A duality for modules over monoidal categories of representations
of semisimple Hopf algebras. {\em J. Algebra} {\bf 241}
(2001), 515--547. 

\end{thebibliography}
\end{document}